\newtheorem{Satz}{Satz}[section]
\newtheorem{Thm}[Satz]{Theorem}
\newtheorem{Corol}[Satz]{Corollary}
\newtheorem{Prop}[Satz]{Proposition}
\theoremstyle{remark}
\theoremstyle{definition} 
\newtheorem{Def}[Satz]{Definition}
\newtheorem{Rem}[Satz]{Remark}
\numberwithin{equation}{section}
\newcommand{\Lrn}{{\mathcal{L}(\rn)}}
\newcommand{\hra}{\ensuremath{\hookrightarrow}}
\newcommand{\Lrp}{\ensuremath{{ L}^{r}_{p}(\rn)}}
\newcommand{\Lmwe}{\ensuremath{{ L}^{r}_{p}(w,\rn)}}
\newcommand{\SLmwe}{\ensuremath{{ L}^{r}_{p}(\lambda,w,\rn)}}
\newcommand{\Lpw}{\ensuremath{L_{p,w} (\rn)}}
\newcommand{\Lpzw}{\ensuremath{L_{p_0,w} (\rn)}}
\newcommand{\rn}{\ensuremath{{\mathbb R}^n}}
\newcommand{\real}{\mathbb{R}}
\newcommand{\eq}{equation}
\newcommand{\supp}{\ensuremath{\mathrm{supp \,}}}
 \newcommand{\Dcal}{\mathcal{D}}
\newcommand{\norm}[2]{\left\Vert\left. #1 \right| #2 \right\Vert}
\begin{document}
\title[Boundedness on Morrey spaces, extrapolation and embeddings]
{Extension and boundedness of operators\\ on Morrey spaces\\ 
from extrapolation techniques and embeddings}

\author{Javier Duoandikoetxea and Marcel Rosenthal} 
\address{Universidad del Pa\'is Vasco/Euskal Herriko Unibertsitatea, Departamento de Matem\'a\-ti\-cas/Matematika saila, 
Apdo. 644, 48080 Bilbao, Spain}

\email{javier.duoandikoetxea@ehu.eus, marcel.rosenthal@uni-jena.de} 
\subjclass[2010]{Primary 42B35, 46E30, 42B15, 42B20; Secondary 42B25}
\thanks{The first author is supported by the grants MTM2014-53850-P of the Ministerio de Econom\'{\i}a y Competitividad (Spain) 
and grant IT-641-13 of the Basque Gouvernment. The second author is supported by the German Academic Exchange Service (DAAD)}

\keywords{Morrey spaces, embeddings, Muckenhoupt weights, extrapolation, Calder\'{o}n-Zygmund operators, Mihlin-H\"ormander multipliers, rough operators} 

\begin{abstract}
We prove that operators satisfying the hypotheses of the extrapolation theorem for Muckenhoupt weights are bounded on weighted Morrey spaces. As a consequence, we obtain at once a number of results that have been proved individually for many operators. On the other hand, our theorems provide a variety of new results even for the unweighted case because we do not use any representation formula or pointwise bound of the operator as was assumed by previous authors. To extend the operators to Morrey spaces we show different (continuous) embeddings of (weighted) Morrey spaces into appropriate Muckenhoupt $A_1$ weighted $L_p$ spaces, which enable us to define the operators on the considered Morrey spaces by restriction. In this way, we can avoid the delicate problem of the definition of the operator, often ignored by the authors. In dealing with the extension problem through the embeddings (instead of using duality) one is neither restricted in the parameter range of the $p$'s (in particular $p=1$ is admissible and applies to weak-type inequalities) nor the operator has to be linear. Another remarkable consequence of our results is that vector-valued inequalities in Morrey spaces are automatically deduced. On the other hand, we also obtain $A_\infty$-weighted inequalities with Morrey quasinorms.
\end{abstract}

\maketitle

\section{Introduction}\label{sec1}
An important tool of modern harmonic analysis is the extrapolation theorem due to Rubio de Francia. 
For a given operator $T$ we suppose that for some $p_0$, $1 \le p_0 < \infty$, and
for every weight belonging to the Muckenhoupt class $A_{p_0}$ the inequality
\begin{\eq} \label{i1}
  \norm{Tf}{\Lpzw}= \left(\int_{\rn} |Tf(x)|^{p_0} w(x) dx \right)^\frac{1}{p_0} \le c \norm{f}{\Lpzw}
\end{\eq}
holds, where the constant $c$ is independent of $f$ and depends on $w$ only in terms of its $A_{p_0}$-constant,  $[w]_{A_{p_0}}$ (see Definition \ref{defap}). 
Then for every $p$, $1 < p < \infty$, and every $w \in A_{p}$, there exists a constant depending
on $[w]_{A_{p}}$ such that
\begin{\eq} \label{i2}
  \norm{Tf}{\Lpw}\le c \norm{f}{\Lpw}
\end{\eq}
(cf.\ \cite{Ru1, Ru2} for the original results, or \cite{CMP11, D11} for modern presentations).
One fact which makes extrapolation theory so powerful is that one does not need any assumption on $T$ besides having $T$ well-defined on $\bigcup_{w\in A_{p_0}} L_{{p_0},w}(\rn)$. (Actually one can extrapolate just inequalities between pairs of functions, not even an operator is needed.)
Even if one is not interested in any weighted results this is very interesting by the fact that for example the weighted $L_2$-boudednesses implies in particular unweighted boundednesses in the complete $L_p$ scale.
Appropriate assertions hold also for subclasses of $A_{p_0}$, that is, one assumes \eqref{i1} for a subclass (e.g. $A_1$) and deduces \eqref{i2} for appropriate subclasses (this applies to different Fourier multipliers, for instance).

We generalize results of this theory to Morrey spaces and will get even in the classical Morrey spaces new results on the boundednesses of operators in the sense that we do not need any condition on $T$ except that $T$ satisfies \eqref{i1} (for weights of $A_{p_0}$ or distinguished subclasses) and is well-defined on $\bigcup_{w\in A_{p_0}} L_{{p_0},w}(\rn)$. 

There are lots of papers dealing with classes of operators in various Morrey type spaces which admit size estimates as 
\begin{\eq} \label{i5}
  |(Tf)(y)|\leq c \int_{\real^n} \frac{|f(x)|}{|y-x|^n} dx 
\end{\eq}
for all $f\in \Dcal(\rn)$ and $y\notin \supp(f)$, where $\Dcal(\rn)$ are the compactly supported smooth functions (cf. \cite{Pee66,ChFr87,N94, Alv96, Y98, Sam09,KS09, G11, Gul12, Mu12, SFZ13, RS14, KGS14, PT15,IST15, Nak16,Wan16} and the references given there).
To obtain the boundedness results they are explicitly using and requiring different representation formulas as \eqref{i5} of the considered operators.
We are completely avoiding these assumptions and encapsulating them through the consideration of weighted spaces. Hence, we get even new results on the classical Morrey spaces regarding Mihlin-H\"ormander type operators, rough operators, pseudodifferential operators, square functions, commutators, Fourier integral operators\ldots
\par
Moreover, literature about (the nonseparable) Morrey spaces (starting from Peetre \cite{Pee66} and many following scholars) does not care about how to extend the considered operators (satisfying a required representation formula only on nondense subsets of the considered Morrey spaces).
Some forerunners dealing with the extension of the operators are \cite{Alv96, AX12, RT13, Ros13,  RoT14_2, T-HS, RS14, Ad15, RS16}.
Mainly, they apply duality (with some exceptions as \cite{T-HS, RS14} justifying \eqref{i5} also for Morrey functions), which restricts the boundedness results to linear operators and moreover the range of the integration parameter $p$ (in particular, the extension problem was not covered for $p=1$ and the corresponding weak-type inequalities).
We show different (continuous) embeddings of (weighted) Morrey spaces into different Muckenhoupt $A_1$ weighted $L_p$ spaces, which enables us to define the operators on the considered Morrey spaces by restriction. 

Another possibility to overcome the extension problem is using the Fatou property of the Morrey space. Nevertheless, this method requires at least a bigger space where the operator is also bounded to extend it by restriction (cf.~Triebel \cite{TrFat}), which is exactly the type of our embeddings. 
However, by the fact that we do not require representation formulas of operators it is even sufficient to show a not necessarily continuous embedding of a Morrey type space into a set of the type $\bigcup_{w\in A_{p}} L_{{p},w}(\rn)$ (where the considered operator is required to be bounded). This can be seen as a powerful model case  to extend operators which are a priori only defined on $\Dcal(\rn)$ ~---as multipliers and (maximally truncated) singular integrals--- to the various generalizations of Morrey spaces given in the literature and to overcome the extension problem (by the fact that we avoid the requirement of justifying \eqref{i5} also for Morrey functions). 
If one wants to deal with other spaces where $\Dcal(\rn)$ is not dense and which are contained in the set $\bigcup_{w\in A_{p}} L_{{p},w}(\rn)$ and where the maximal operator is bounded (or in their appropriate associated spaces), then there are even very good chances that the presented results could be carried over partially.

A extrapolation result for Morrey spaces appears also in \cite{RS16}. Its proof uses the boundedness of the Hardy-Littlewood maximal operator on predual Morrey spaces, which are also used to define the operator by duality. We avoid this in our approach and obtain more general results, valid also for end-points and for limited-range type assumptions.   

We also extrapolate from inequalities with $L_{p,w}$ (quasi)-norms for $0<p<\infty$ and $w\in A_\infty$ to Morrey (quasi)-norms. In this way we get new Morrey versions of several interesting inequalities of this type appearing in the literature for pairs of operators.

We consider only Muckenhoupt weights because the extrapolation techniques are adapted to them and because there are many operators for which the boundedness on Lebesgue spaces with (subclasses of) Muckenhoupt weights are known, so that we can immediately infer Morrey estimates from our general results. Nevertheless, a description of the admissible weighted Morrey estimates is not known even for the Hardy-Littlewood maximal operator and, in fact, in the case of the Morrey spaces $\SLmwe$ (see Definition \ref{d1:def}) the class of admissible weights goes beyond the Muckenhoupt $A_p$ class as shown in \cite{Ta15}. A similar situation holds for the Hilbert transform (\cite{Sam09}) and more generally for the Riesz transforms and other singular integrals (\cite{NkSa17}). On the other hand, in these cases the problem of defining the operators in the corresponding weighted spaces, which in our case is solved by the embeddings, should be considered. 

After introducing the notation and some preliminaries in Section \ref{sec2}, Section \ref{sec3} is concerned with continuous embeddings of Morrey type spaces into Muckenhoupt weighted $A_1$ Lebesgue spaces. 
In Section \ref{sec4} we present the extrapolation theory generalized to Morrey type spaces which will be applied to far-reaching boundedness results given in Section \ref{sec5}. 
\section{Notation, Morrey spaces and preliminaries}\label{sec2}

A weight is a locally integrable nonnegative function. For $0< p <\infty$, $L_{p,w} (\rn)$ is the complex quasi-Banach space of functions whose $p$-th power is integrable with respect to the weight $w$, with the quasinorm given by
\[
\| f \, | L_{p,w} (\rn) \| = \Big( \int_{\rn} |f(x)|^p \,  w(x) dx \Big)^{1/p}= \Big( \int_{\rn} |f|^p \,  w \Big)^{1/p}.
\]
Moreover, we write $w(M)=\int_{M} w(x) dx$ for the measure of the subset $M$ of $\rn$. We similarly define $L_{p,w} (M)$. 
If $w(x)\equiv 1$, we simply write $L_{p}(M)$, $\| \cdot \, | L_{p} (M) \|$ and $|M|$. 
Furthermore, 
$\chi_M$ denotes the characteristic function of $M$. 
For any $p\in (1,\infty)$ we denote by $p'$ the conjugate index, namely, $1/p+1/{p'}= 1$. If $p=1$, then $p'=\infty$.
Moreover, $\mathcal{L}(\rn)$ collects all equivalence classes of almost everywhere coinciding measurable complex functions. A subset of  $\mathcal{L}(\rn)$ is $L_1^{\mathrm{loc}}(\rn)$, which collects all locally integrable functions, that is, functions in $L_1 (M)$ for any bounded measurable set $M$ of $\rn$. 
We consider cubes whose sides are parallel to the coordinate axes. For such a cube $Q$,  $\delta Q$ stands for the concentric cube with side-length $\delta$ times of the side-length of $Q$.
The concrete value of the constants may vary from one
formula to the next, but remains the same within one chain of (in)equalities. 

\begin{Def} \label{defap}
Let 
$w\in L_1^\textrm{loc}(\rn)$ with $w>0$ almost everywhere. We say that $w$ is a \textit{Muckenhoupt weight} belonging to $A_p$ for $1<p<\infty$ if
\begin{\eq*} 
  [w]_{A_p}\equiv 
	\sup_Q \frac{w(Q)}{|Q|} \left( \frac{w^{1-p'}(Q)}{|Q|} \right)^{p-1}<\infty,
\end{\eq*}
where the supremum is taken over all cubes $Q$ in $\real^n$. The quantity  $[w]_{A_p}$ is the $A_p$ constant of $w$.
We say that $w$  belongs to $A_1$ if, for any cube $Q$,
\begin{equation*} 
  \frac{w(Q)}{|Q|}\le c w(x) \text{ for almost all } x\in Q.
\end{equation*}
The $A_1$ constant of $w$, denoted by  $[w]_{A_1}$, is the smallest constant $c$ for which the inequality holds. 

We say that $w$ is in $A_\infty$ if $w\in A_p$ for some $p$.
\end{Def}
\begin{Rem}
The $A_p$ weights are doubling. This means that if $w\in A_p$, there exists a constant $c$ such that 
\begin{equation} \label{DC} 
 w(2Q)\le cw(Q),
\end{equation} 
 for every cube $Q$ in $\rn$ (cf.~\cite[(7.3)]{D01}). 
\end{Rem}
We recall that the Hardy-Littlewood maximal operator, which we denote by $M$, is bounded on $L_{p,w} (\rn)$ for $1< p <\infty$ if and only if $w\in A_p$ and is of weak-type $(1,1)$ with respect to the measure $w(x)dx$ if and only if $w\in A_1$. On the other hand, we will use the following construction of $A_1$ weights: if $Mf(x)$ is finite a.e.~and $0\le\delta<1$, then $Mf(x)^\delta$ is an $A_1$ weight whose $A_1$-constant depends only on $\delta$. Moreover, the factorization theorem says that $w\in A_p$ if and only if there exist $w_0, w_1\in A_1$ such that $w=w_0w_1^{1-p}$ (cf. \cite{Ru2} or \cite[Thm.~9.5.1]{G09}).  

We define some Muckenhoupt weighted Morrey spaces.
\begin{Def}{} \label{d1:def}
\upshape 
	For $0< p< \infty$, $-\frac{n}{p}\leq r<0$ and a weight $w$ we define 
	\begin{equation*}
				\Lmwe\equiv\{f \in \Lrn \mbox{ : } \left\|f|\Lmwe\right\|<\infty\}
	\end{equation*}
	with the quasinorm
	\begin{align*} 
		\left\|f|\Lmwe\right\|\equiv & 
\sup_{Q} w(Q)^{-\left(\frac{1}{p}+\frac{r}{n} \right)} \left\|f|L_{p, w}(Q)\right\|,
	\end{align*}	
where the supremum is taken over all cubes $Q$ in $\real^n$.	
Moreover,
		\begin{equation*}
				\SLmwe\equiv\{f \in \Lrn \mbox{ : } \left\|f|\SLmwe\right\|<\infty\}
	\end{equation*}
	with the quasinorm  
	\begin{align*} 
		\left\|f|\SLmwe\right\|\equiv & 
\sup_{Q} |Q|^{-\left(\frac{1}{p}+\frac{r}{n} \right)} \left\|f|L_{p, w}(Q)\right\|.
	\end{align*}	
We also define their weak versions given by the quasinorms
	\begin{align*} 
		&\left\|f|W\Lmwe\right\|\equiv  
\sup_{Q} \sup_{t>0} w(Q)^{-\left(\frac{1}{p}+\frac{r}{n} \right)}  t w(\{x\in Q:\ |f(x)|>t\})^{\frac{1}{p}} ,
		\\ &\left\|f|W\SLmwe\right\|\equiv 
\sup_{Q} \sup_{t>0} |Q|^{-\left(\frac{1}{p}+\frac{r}{n} \right)} t w(\{x\in Q:\ |f(x)|>t\})^{\frac{1}{p}}.
	\end{align*}	
\end{Def}
\begin{Rem}
We observe that ${L}^{-{n}/{p}}_p(w,\rn)={L}^{-{n}/{p}}_p(\lambda,w,\rn)=L_{p, w}(\rn)$. If $w\equiv 1$, $\Lmwe$ and $\SLmwe$ coincide with the unweighted Morrey space $\Lrp$. The weak Morrey spaces coincide also with the weak $\Lpw$ spaces for $r=-n/p$.
\end{Rem}

\begin{Def}\label{defRH}
\upshape 
Let a nonnegative locally integrable function $w$ on $\real^n$ belong to the \textit{reverse H\"older class} $RH_\sigma$ for $1<\sigma<\infty$ if it satisfies the \textit{reverse H\"older inequality} with exponent $\sigma$, i.e. \begin{\eq*} 
  \left(\frac{1}{|Q|}\int_{Q} w(x)^\sigma dx \right)^\frac{1}{\sigma}\le \frac{c}{|Q|} \int_{Q} w(x) dx,
\end{\eq*}
where the constant $c$ is universal for all cubes $Q\subset \rn$.
\end{Def}

The $RH_\sigma$ classes are decreasing, that is, $RH_\sigma \subset RH_\tau$ for $1<\tau<\sigma<\infty$. On the other hand, Gehring's lemma (\cite{gehring}) says that if $w\in RH_\sigma$, there exists $\epsilon>0$ such that $w\in RH_{\sigma+\epsilon}$ (openness of the reverse H\"older classes).

\begin{Rem} Let $w\in RH_\sigma$. For any cube $Q$ and any measurable $E\subset Q$ it holds that 
\begin{equation}\label{ineqRH}
 \frac{w(E)}{w(Q)}\le c \left(\frac{|E|}{|Q|}\right)^{1/\sigma'}. 
\end{equation}
(See \cite[Cor.~7.6]{D01}). Since $w\in A_p$ implies that $w\in RH_\sigma$ for some $\sigma$ (cf. \cite[Thm.~7.4]{D01}), the inequality holds for each $A_p$ weight for the appropriate $\sigma$. Consequences of the reverse H\"older inequalities for weights are that if $w\in A_p$, then there exist $\epsilon>0$ and $s>1$ such that $w\in A_{p-\epsilon}$ (openness of $A_p$ classes) and $w^s\in A_p$.
\end{Rem}

 In the paper we use sometimes classes of the form $A_p\cap RH_\sigma$. There is a characterization of R.~Johnson and C.~Neugebauer for them (\cite{JN91}):
\begin{equation}\label{apandrh}
A_p\cap RH_\sigma =\{w: w^\sigma\in A_{\sigma(p-1)+1}\}.
\end{equation}
This is also valid for $p=1$, that is, $A_1\cap RH_\sigma =\{w: w^\sigma\in A_{1}\}$, which can be easily proved from the definition of $A_1$. 

\begin{Rem} \label{Ma3}
Let $w(x)=|x|^\alpha$ or $w(x)=(1+|x|)^\alpha$. Then we have the following:
\begin{enumerate}
\item It holds that $w \in A_p$ if and only if $-n<\alpha<n(p-1)$ whenever $p\in (1,\infty)$, and $-n<\alpha\le 0$ whenever $p=1$.
	\item For $\alpha \in (-n,0)$ it holds $w \in RH_\sigma$ if and only if $1<\sigma<-n/\alpha$. \label{Ma1}
	\item For $\alpha \ge 0$ it holds $w \in RH_\sigma$ for all $1<\sigma<\infty$. \label{Ma2}
\end{enumerate}
The first one is well known. The second one is easily obtained using \eqref{apandrh} (for $p=1$). The third one can be checked directly or derived from \eqref{apandrh}.
\end{Rem}

\section{Embeddings of Muckenhoupt weighted Morrey spaces into Muckenhoupt weighted Lebesgue spaces}\label{sec3}
In this section we establish different continuous embeddings between Morrey spaces of the types $\Lmwe$ and $\SLmwe$ into some Muckenhoupt $A_1$ weighted Lebesgue spaces which will give us the possibility to define operators on these Morrey spaces by restriction. Extension by continuity is usually not working in Morrey type spaces by their nonseparability and also by the fact that the smooth functions are not dense in these spaces (see \cite[Prop.~3.7]{RoT14_2} for $\Lrp$ and \cite[Prop.~3.2]{RS16} for $\Lmwe$ for the nonseparability, and see \cite[p.~22]{Pic69} for the nondensity). In recent literature this difficulty has been treated using duality for linear operators, whenever the integration parameter $p$ satisfies $1<p<\infty$. Dealing with the extension problem through embeddings one is neither restricted in the parameter range of $p$ (in particular, $p=1$ is admissible) nor the operator has to be linear.

\begin{Prop} \label{LEmb}
Let $1\le\gamma < p<\infty$, $-n/p \le r<0$ and $w\in A_{p/\gamma}$. 
Then there exists $q_0\in(\gamma, p)$ such that for each $q\in [\gamma,q_0]$,  $1<s\le s_0(q)$, each cube $Q$, and $h\in L_{(p/q)',w}(Q)$ with norm $1$, the continuous embedding
\begin{\eq} \label{emb1new}
  \Lmwe \hra L_{q,M(h^sw^s\chi_Q)^{1/s}}(\rn)
\end{\eq} 
holds with constant independent of $h$ and depending on $Q$ as $w(Q)^{1/p+r/n}$.

Furthermore, with the additional assumptions $w\in RH_{\sigma}$ and $r\le-n/(p\sigma)$, it also holds that
\begin{\eq} \label{emb2new}
  \SLmwe \hra L_{q,M(h^sw^s\chi_Q)^{1/s}}(\rn)
\end{\eq} 
 with constant independent of $h$ and depending on $Q$ as $|Q|^{1/p+r/n}$.

As a consequence, under the same conditions on $p$, $r$ and $w$, for $q\in [\gamma,q_0]$ there is $0<\alpha<n$ such that
\begin{\eq} \label{emb1}
  \Lmwe, \SLmwe \hra L_{q,(1+|x|)^{-\alpha}}(\rn).
\end{\eq}
\end{Prop}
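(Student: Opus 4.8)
The plan is to establish the two weighted embeddings \eqref{emb1new} and \eqref{emb2new} by estimating, for $f$ in the corresponding Morrey space, the integral $\int_{\rn}|f|^{q}\,M(h^{s}w^{s}\chi_{Q})^{1/s}$ directly, and then to deduce \eqref{emb1} by specializing $Q$ and $h$. I split $\rn$ into the local part $2Q$ and the dyadic coronas $R_{k}=2^{k+1}Q\setminus 2^{k}Q$, $k\ge 1$. Before estimating I fix the parameters. Since $w\in A_{p/\gamma}$, the openness of the $A$-classes gives $\epsilon>0$ with $w\in A_{p/\gamma-\epsilon}$; I define $q_{0}\in(\gamma,p)$ by $p/q_{0}=p/\gamma-\epsilon$, so that $w\in A_{p/q}$ with a constant bounded uniformly for $q\in[\gamma,q_{0}]$. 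The threshold $s_{0}(q)>1$ is then taken at the end as the smallest of the several thresholds produced below, each of which is a strict inequality at $s=1$.

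For the local part I apply Hölder's inequality with exponents $p/q$ and $(p/q)'$,
\[
\int_{2Q}|f|^{q}M(h^{s}w^{s}\chi_{Q})^{1/s}\le \norm{f}{L_{p,w}(2Q)}^{\,q}\Big(\int_{2Q}M(h^{s}w^{s}\chi_{Q})^{(p/q)'/s}\,w^{-q/(p-q)}\Big)^{(p-q)/p}.
\]
Here I use the duality $w\in A_{p/q}\Leftrightarrow w^{-q/(p-q)}\in A_{(p/q)'}$ and, shrinking $s$ towards $1$ (a first threshold for $s_{0}(q)$), the openness which upgrades this to $w^{-q/(p-q)}\in A_{(p/q)'/s}$; the weighted boundedness of $M$ on $L_{(p/q)'/s}(w^{-q/(p-q)})$ bounds the last integral by $\int_{Q}h^{(p/q)'}w^{(p/q)'}w^{-q/(p-q)}=\int_{Q}h^{(p/q)'}w=1$, since $s\cdot(p/q)'/s=(p/q)'$ and the powers of $w$ cancel exactly. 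With the Morrey bound $\norm{f}{L_{p,w}(2Q)}\le w(2Q)^{1/p+r/n}\norm{f}{\Lmwe}$ and the doubling of $w$, the local part is of the right order $w(Q)^{q/p+qr/n}\norm{f}{\Lmwe}^{q}$.

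For the tails I use the elementary decay $M(h^{s}w^{s}\chi_{Q})(x)\lesssim 2^{-kn}|Q|^{-1}\int_{Q}h^{s}w^{s}$ for $x\in R_{k}$, so that $M(h^{s}w^{s}\chi_{Q})^{1/s}\lesssim 2^{-kn/s}A$ with $A=(|Q|^{-1}\int_{Q}h^{s}w^{s})^{1/s}$. On $\int_{2^{k+1}Q}|f|^{q}$ I repeat the Hölder/$A_{p/q}$ step above and insert the Morrey bound on $2^{k+1}Q$; the decisive gain is the reverse doubling $w(2^{k+1}Q)\gtrsim 2^{(k+1)n/\sigma'}w(Q)$ read off from \eqref{ineqRH}, which (as $r<0$) makes the $k$-sum geometric with ratio $2^{\,n(1-1/s)+qr/\sigma'}$, summable for $s$ close to $1$ (a second threshold for $s_{0}(q)$). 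A Hölder estimate in the measure $w\,dx$, collapsing again by $\norm{h}{L_{(p/q)',w}(Q)}=1$ and handling the surplus power of $w$ by the reverse Hölder inequality (a third threshold), gives $A\lesssim w(Q)^{q/p}/|Q|$, exactly what makes the summed tails $\lesssim w(Q)^{q/p+qr/n}\norm{f}{\Lmwe}^{q}$. Adding the two parts and taking $q$-th roots proves \eqref{emb1new}.

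For \eqref{emb2new} I run the same argument with $\norm{f}{L_{p,w}(2^{k+1}Q)}\le|2^{k+1}Q|^{1/p+r/n}\norm{f}{\SLmwe}$; only the geometric ratio of the tail changes, its exponent at $s=1$ having the sign of $1/p+r/n-1/(p\sigma')$, which is negative precisely under the hypothesis $r\le-n/(p\sigma)$ (using $1/\sigma'=1-1/\sigma$ and the openness of $RH_{\sigma}$ to absorb the boundary case); the local part and the bound on $A$ are unchanged and leave the constant $|Q|^{q/p+qr/n}$. Finally, for \eqref{emb1} I fix $Q=Q_{0}$ the unit cube at the origin and $h=h_{0}$ a constant on $Q_{0}$ with $\norm{h_{0}}{L_{(p/q)',w}(Q_{0})}=1$ (note $\int_{Q_{0}}w^{s}<\infty$ for $s$ below the reverse Hölder exponent of $w$, a final constraint folded into $s_{0}(q)$); testing the maximal function on a cube of side $\sim 1+|x|$ containing $x$ and $Q_{0}$ gives $M(h_{0}^{s}w^{s}\chi_{Q_{0}})^{1/s}(x)\gtrsim(1+|x|)^{-n/s}$, so with $\alpha=n/s\in(0,n)$ the pointwise bound $(1+|x|)^{-\alpha}\lesssim M(h_{0}^{s}w^{s}\chi_{Q_{0}})^{1/s}$ yields \eqref{emb1} from \eqref{emb1new} and \eqref{emb2new}. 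I expect the main obstacle to be the local part: the maximal-function weight is genuinely large near $Q$, and the crux is that the weighted maximal inequality together with the precise normalization of $h$ makes the auxiliary integral collapse to the constant $1$; the accompanying difficulty is purely bookkeeping, namely checking that all the self-improvement thresholds are simultaneously met, which is what pins down $q\le q_{0}$ and $s\le s_{0}(q)$.
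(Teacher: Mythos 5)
Your argument is correct and follows essentially the same route as the paper's proof: the same decomposition into $2Q$ and dyadic coronas, the same H\"older/weighted-maximal-function step that collapses the local auxiliary integral to $\norm{h}{L_{(p/q)',w}(Q)}=1$, and the same tail estimate combining the decay of $M(h^sw^s\chi_Q)$ off $2Q$, the $A_{p/q}$ condition, and the reverse H\"older inequality \eqref{ineqRH} to make the dyadic sum geometric, with the openness of the reverse H\"older classes supplying the endpoint $r=-n/(p\sigma)$. The only (harmless) deviations are that you treat general $\gamma$ directly rather than reducing to $\gamma=1$ via the scaling $\norm{f}{\Lmwe}=\norm{|f|^\gamma}{L^{r\gamma}_{p/\gamma}(w,\rn)}^{1/\gamma}$, you control the average of $h^sw^s$ over $Q$ by a reverse H\"older inequality for $w$ instead of the $A_{(p/q)'/s}$ condition on the dual weight (an equivalent computation), and for \eqref{emb1} you take $h$ constant on the unit cube where the paper takes $h=cw^{-1}$.
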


\begin{proof} We prove the result for $\gamma=1$. The case $\gamma>1$ will follow as a consequence.

Choose $q_0\in (1,p)$ such that $w\in A_{p/q_0}$. For $q\in [1,q_0]$, $w\in A_{p/q}$. Fix one such $q$ and set $\tilde p=p/q$. Let $1<s<\tilde p'$ such that $w^{1-\tilde p'}\in A_{\tilde p'/s}$. 

For a function $h$ with $\Vert h| L_{\tilde p',w}(Q)\Vert=1$, the weight $M(h^s w^s \chi_{Q})^{1/s}$ is well defined because $h^s w^s \chi_{Q} \in L_1(\rn)$, which implies $M(h^s w^s \chi_{Q}) < \infty$ almost everywhere (by the fact that $M$ maps $L_1(\rn)$ to the weak $L_1(\rn)$ space). We check that  $h^s w^s \chi_{Q} \in L_1(\rn)$ and obtain an estimate that will be needed below. Indeed,
\begin{align} 
\begin{split} \label{p3}
	 \left(\int_{Q} h^s w^{s-1} w\right)^\frac{1}{s}
	&\le\norm{h}{L_{\tilde p',w}(\rn)}
	\left(\int_{Q} w^{(s-1) \frac{\tilde p'}{\tilde p'-s} +1} \right)^{\frac1{s}-\frac{1}{\tilde p'}}\\ 
	& \le c\  |Q|^{\frac{1}{s}}w^{1- \tilde p'}(Q)^{-\frac{1}{\tilde p'}}
		\le c\ w(Q)^{\frac{1}{\tilde p}} |Q|^{-\frac{1}{s'}},
\end{split} 
\end{align}
where the second inequality holds because $w^{1-\tilde p'}\in A_{\tilde p'/s}$ (the exponent of $w$ in the integral is the same as $(1-\tilde p')(1-(\tilde p'/s)')$) and in the last one we use 
\begin{equation*}
|Q|\le w(Q)^{\frac{1}{\tilde p}} w^{1- \tilde p'}(Q)^{\frac{1}{\tilde p'}}.
\end{equation*}

Let $f\in  \Lmwe$ and assume that $f$ is nonnegative. To estimate the norm of $f$ in $L_{q,M(h^sw^s\chi_Q)^{1/s}}(\rn)$ we decompose the integral into two parts: first we integrate over $2Q$ and next over $\rn\setminus 2Q$. Over $2Q$ we use H\"older's inequality,
\begin{equation*} 
  \left(\int_{2Q} f^{q}  M(h^s w^s \chi_{Q})^\frac{1}{s}\right)^\frac 1q  \le 
	\left(\int_{2Q} f^{p}w\right)^{\frac1{p}}\left(\int_{2Q} M(h^s w^s \chi_{Q})^{\frac{\tilde p'}s}w^{1- \tilde p'}\right)^{\frac 1{q\tilde p'}}.
\end{equation*}
The first term of the right-hand side is bounded by $w(2Q)^{\frac{1}{p}+\frac{r}{n}} \norm{f}{\Lmwe}$ and using \eqref{DC} we can replace $w(2Q)$ by $w(Q)$. In the second one we use that $w^{1-\tilde p'}\in A_{\tilde p'/s}$ and the boundedness of $M$ on $L_{\tilde p'/s,w^{1-\tilde p'}}(\rn)$ to get a constant times the norm of $h$ in $L_{\tilde p',w}(\rn)$, which is $1$.

To handle the integral over $\rn\setminus 2Q$, we use that $M(h^s w^s \chi_{Q})(x)$ for $x\in 2^{i+1} Q\setminus 2^i Q$ ($i\ge 1$) is comparable to the average of $h^s w^s \chi_{Q}$ over $ 2^{i+1} Q$. 
Then we obtain 
\begin{align} 
\begin{split} \label{p4}
	& \int_{\rn\setminus 2Q} f^{q}M(h^s w^s \chi_{Q})^\frac{1}{s}
  \le c_1 \sum_{i=1}^\infty \int_{2^{i+1} Q\setminus 2^i Q} f^{q}\left( \frac{\int_{Q} h^s w^s}{|2^{i}Q|} \right)^\frac{1}{s}
  \\ &\le  c_2 \sum_{i=1}^\infty \int_{2^{i+1} Q\setminus 2^i Q} f^{q} w^{\frac 1{\tilde p}} w^{-\frac 1{\tilde p}}\frac{w(Q)^{\frac 1{\tilde p}}}{|2^{i}Q|^\frac{1}{s} |Q|^{\frac{1}{s'}}} 
  \\ &\le  c_2 \sum_{i=1}^\infty \left(\int_{2^{i+1} Q\setminus 2^i Q} f^{p} w \right)^\frac{q}{p} w^{1-\tilde p'} \left(2^{i+1}Q\right)^{\frac 1{\tilde p'}} \frac{w(Q)^{\frac 1{\tilde p}}}{2^{in/s}|Q|}
  \\ &\le  c_3 \norm{f}{\Lmwe}^q
	\sum_{i=1}^\infty w\left(2^{i+1} Q \right)^{\left(\frac{1}{p}+\frac{r}{n}\right)q} w^{1-\tilde p'} \left(2^{i+1}Q\right)^{\frac 1{\tilde p'}} \frac{w(Q)^{\frac 1{\tilde p}}}{2^{in/s}|Q|}.
\end{split} 
\end{align}
Since $w\in A_{\tilde p}$, then
\begin{equation}\label{p6b}
w\left(2^{i+1} Q \right)^{\frac 1{\tilde p}} w^{1-\tilde p'} \left(2^{i+1}Q\right)^{\frac 1{\tilde p'}}\le c |2^{i+1} Q|,
\end{equation}
and it is sufficient to show 
\begin{align} \label{p5}
 \sum_{i=1}^\infty w\left(2^{i+1} Q \right)^{\frac{r}{n}q} w(Q)^\frac{q}{p} 2^{\frac{in}{s'}} 
	\le c \, w\left( Q \right)^{\left(\frac{1}{p}+\frac{r}{n}\right)q}.
\end{align}
Taking into account \eqref{ineqRH}, there exists $\delta>0$ such that 
\begin{align*}
\begin{split}
	w(2^{i+1}Q)^{\frac{r}{n}q} \le c\, 2^{i\delta r q} w(Q)^{\frac{r}{n}q}.
\end{split} 
\end{align*}
Therefore, \eqref{emb1new} is satisfied for $s$ sufficiently close to $1$.

In the case of the spaces $\SLmwe$, the part corresponding to the integral over $2Q$ is obtained in a similar way using 
\[
 \left(\int_{2Q} f^{p}w\right)^{\frac1{p}}
	\le c |Q|^{\frac{1}{p}+\frac{r}{n}} \norm{f}{\SLmwe}. 
\]
For the integral over $\rn\setminus 2Q$, we first modify the last line of \eqref{p4} and then use \eqref{p6b} to get
\begin{align*} 
\begin{split} 
	& \int_{\rn\setminus 2Q} f^{q}M(h^s w^s \chi_{Q})^\frac{1}{s}
  \\ & \le  c_1 \norm{f}{\SLmwe}^q
	\sum_{i=1}^\infty \left|2^{i+1} Q \right|^{\left(\frac{1}{p}+\frac{r}{n}\right)q} w^{1-\tilde p'} \left(2^{i+1}Q\right)^{\frac 1{\tilde p'}} \frac{w(Q)^{\frac 1{\tilde p}}}{2^{in/s}|Q|}
	  \\ & \le  c_2 \norm{f}{\SLmwe}^q
	\sum_{i=1}^\infty \left|2^{i+1} Q \right|^{\left(\frac{1}{p}+\frac{r}{n}\right)q} 
	2^{\frac{in}{s'}} \left[w(2^{i+1}Q)^{-1}\ w(Q)\right]^\frac{q}{p}.
\end{split} 
\end{align*}
By the latter we obtain \eqref{emb2new} showing that
\begin{align} 
\begin{split} \label{p10}
	\sum_{i=1}^\infty 2^{inq[\frac{1}{p}+\frac{r}{n}+\frac{1}{s'q}]} 
	\left[w(2^{i+1}Q)^{-1}\ w(Q)\right]^\frac{q}{p} \le c.
\end{split} 
\end{align}
By the fact that $w\in RH_{\sigma}$, from \eqref{ineqRH} we deduce 
\begin{align} 
\begin{split} \label{p12}
  \frac{w(Q)}{w(2^{i}Q)}
	\le c \left(\frac{|Q|}{|2^{i}Q|}\right)^{1/\sigma'}
	= c  2^{-i\frac n{\sigma'}}.
\end{split} 
\end{align}
Inserting \eqref{p12} into the left hand-side of \eqref{p10} and taking into account that $s$ is as close to $1$ as desired, the geometric series is convergent for $r/n+1/(p\sigma)<0$. The endpoint $r=-n/(p\sigma)$ of the statement is attained from the openness property of the reverse H\"older classes.

Let now $1<\gamma<p$ and $f\in \Lmwe$ with $w\in A_{p/\gamma}$. Since
\begin{equation} \label{TFE}
	\norm{f}{\Lmwe}
	= \norm{|f|^\gamma}{L^{r\gamma}_{p/\gamma}(w,\rn)}^\frac{1}{\gamma},
\end{equation}
$|f|^\gamma\in L^{r\gamma}_{p/\gamma}(w,\rn)$ (with $w\in A_{p/\gamma}$) and we can apply the first part of the proof. It gives the existence of $q_0^*\in (1,p/\gamma)$ and $s>1$ such that for $q^*\in (1,q_0^*]$, 
\begin{equation*}
\norm{|f|^\gamma}{L^{r\gamma}_{p/\gamma}(w,\rn)}\le c \norm{|f|^\gamma}{L_{q^*,M(h^sw^s\chi_Q)^{1/s}}(\rn)},
\end{equation*}
for any cube $Q$ and $h\in L_{(p/(\gamma q*))',w}(Q)$ with norm $1$. Define $q_0=q_0^*\gamma$ and $q=q^*\gamma$ and \eqref{emb1new} is proved. The case $f\in\SLmwe$ with $w\in A_{p/\gamma}$ is treated similarly.

To obtain \eqref{emb1}, let $Q$ be the unit cube and $h=cw^{-1}$, with $c$ chosen such that $h$ has unit norm. Use now that $M(\chi_Q)(x)$ is bounded below by a positive constant if $x\in 2Q$ and by $c|x|^{-n}$ if $x\notin 2Q$. Take $\alpha=n/s$ to conclude. 
\end{proof}
\begin{Rem} \label{3.2T} Since $w\in A_p$ implies that $w\in RH_\sigma$ for some $\sigma>1$, the range of values of $r$ satisfying the embedding \eqref{emb1} for $\SLmwe$ is not empty.
In the case of the weights $|x|^\beta$ and $(1+|x|)^\beta$ such range is
\[
  \begin{cases}  
	  -\frac{n}{p} \le r<0, & \text{ if }\ 0\le \beta<n(\frac p{\gamma}-1); \\
		-\frac{n}{p} \le r<\frac{\beta}{p}, &\text{ if }\ -n<\beta<0. 	
	\end{cases}
\]
\end{Rem}

The following proposition shows that the union of the $L_{q,w}(\rn)$ spaces for fixed $q$ and all $w\in A_q$ is invariant for $1<q<\infty$. This result appears in \cite{KMM16} in a more general context. We give here a direct proof.

\begin{Prop} \label{DfOp}
Let be $1\le \gamma< q<\infty$. 
Then it holds that
\begin{\eq} \label{DefOp}
  \bigcup_{w\in A_{q/\gamma}} L_{q,w}(\rn) = \bigcup_{\gamma<p<\infty} \bigcup_{w\in A_{p/\gamma}} \Lpw 
	 \subsetneq \bigcup_{w\in A_1} L_{\gamma,w}(\rn) . 
\end{\eq}
Hence, the left-hand side is independent of $q$.
\end{Prop}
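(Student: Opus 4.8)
The plan is to establish the three relations in \eqref{DefOp} separately. The two displayed equalities will come from simple monotonicity/reduction arguments, while the strict inclusion into $\bigcup_{w\in A_1} L_{\gamma,w}(\rn)$ carries the real content. First I would reduce to the case $\gamma=1$ by the same power trick used in Proposition~\ref{LEmb}: for any exponent $q$ and weight $v$, one has $f\in L_{q,v}(\rn)$ if and only if $|f|^\gamma \in L_{q/\gamma,v}(\rn)$, and $v\in A_{q/\gamma}$ is exactly the condition that makes the relevant class an $A_{q/\gamma}$ class after raising $q$ to $q/\gamma$. Thus applying the $\gamma=1$ statement to $|f|^\gamma$ (with integrability exponent $q/\gamma$) yields the general $\gamma$ case, and I would carry out the detailed verification only for $\gamma=1$.

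For $\gamma=1$ the first equality reads $\bigcup_{w\in A_q} L_{q,w}(\rn)=\bigcup_{1<p<\infty}\bigcup_{w\in A_p}\Lpw$, i.e.\ the union is independent of the integration exponent $q$. The inclusion ``$\subseteq$'' is trivial (take $p=q$). For ``$\supseteq$'', given $f\in L_{p,w}(\rn)$ with $w\in A_p$, I must produce, for the fixed target exponent $q$, some $v\in A_q$ with $f\in L_{q,v}(\rn)$. The natural device is to absorb the discrepancy between $p$ and $q$ into the weight: since $Mf$ (or more safely $M$ applied to a suitable truncation) yields $A_1$ weights of the form $(Mg)^\delta$ for $0\le\delta<1$, and products/powers of $A_1$ weights together with the factorization theorem $w=w_0w_1^{1-p}$ give full control of $A_p$ weights, one can manufacture a weight $v$ for which the integral $\int |f|^q v$ is finite. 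Concretely, writing $v=w\,u$ where $u$ is chosen to compensate the change of exponent via Hölder, and verifying $v\in A_q$ through the factorization theorem and the closure of $A_1$ under the operations recalled in Section~\ref{sec2}, should do it.

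The strict inclusion in the final position is the decisive point, and I would split it into inclusion plus strictness. For the inclusion $\bigcup_{w\in A_q} L_{q,w}(\rn)\subseteq\bigcup_{w\in A_1}L_{1,w}(\rn)$ (after the $\gamma$-reduction), given $f\in L_{q,w}(\rn)$ with $w\in A_q$, I would seek $v\in A_1$ with $\int|f|\,v<\infty$. By Hölder with exponents $q$ and $q'$, $\int |f| v = \int |f| w^{1/q} \cdot v w^{-1/q}$ is controlled by $\|f\|_{L_{q,w}}$ times $(\int (v w^{-1/q})^{q'})^{1/q'}$, so it suffices to pick an $A_1$ weight $v$ making the latter factor finite; choosing $v$ to decay like a negative power of $1+|x|$ (which is $A_1$ for small enough exponent, cf.\ Remark~\ref{Ma3}) and also to be small where $w^{-1/q}$ is large achieves integrability, the role played here exactly by the embeddings of Proposition~\ref{LEmb} which land in $L_{\gamma,(1+|x|)^{-\alpha}}(\rn)$. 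Strictness is then witnessed by exhibiting a single $f\in\bigcup_{w\in A_1}L_{1,w}(\rn)$ lying outside every $L_{q,v}(\rn)$ with $v\in A_q$: a function with a sufficiently strong local singularity, such as $f(x)=|x|^{-n/q}(\log(1/|x|))^{-1}\chi_{\{|x|<1/2\}}$, is $L_1$ against a mild $A_1$ weight yet forces $\int|f|^q v=\infty$ for any $A_q$ weight $v$, since such $v$ cannot vanish fast enough at the origin (an $A_q$ weight satisfies a lower doubling/reverse bound that prevents $v$ from killing the non-integrable $q$-th power). I expect this last step—pinning down the precise singularity that defeats \emph{all} $A_q$ weights simultaneously while surviving against \emph{some} $A_1$ weight—to be the main obstacle, as it requires using the structural constraints on $A_q$ weights (their reverse Hölder behaviour and the fact that a nontrivial $A_q$ weight has a controlled zero set) rather than a pointwise estimate.
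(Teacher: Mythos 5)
There is a genuine error in the decisive step. Your proposed witness for strictness, $f(x)=|x|^{-n/q}(\log(1/|x|))^{-1}\chi_{\{|x|<1/2\}}$, does not work: $|f|^q=|x|^{-n}(\log(1/|x|))^{-q}$ and
\[
\int_{|x|<1/2}|x|^{-n}\bigl(\log(1/|x|)\bigr)^{-q}\,dx = c_n\int_{\log 2}^{\infty}t^{-q}\,dt<\infty
\]
for $q>1$, so $f\in L_{q,1}(\rn)$ with the constant weight $1\in A_q$, i.e.\ $f$ lies \emph{inside} the union you are trying to escape. Your heuristic that ``$v$ cannot vanish fast enough at the origin'' is moot because no weight is needed at all. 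Moreover, a witness must fail against $L_{p,v}$ for \emph{every} $p\in(1,\infty)$ and $v\in A_p$ simultaneously, and a direct computation against all such weights is delicate. The paper sidesteps this entirely: it takes $f(x)=|x|^{-n}(\ln|x|)^{-2}\chi_{\{|x|\le 1/2\}}\in L_1(\rn)$, observes that $Mf$ is not locally integrable, and notes that $f\in L_{p,v}(\rn)$ with $v\in A_p$, $p>1$, would force $Mf\in L_{p,v}(\rn)\subset L_1^{\mathrm{loc}}(\rn)$ by the boundedness of the maximal operator --- a contradiction. That one-line mechanism (boundedness of $M$ plus $L_{p,v}\subset L_1^{\mathrm{loc}}$) is the idea your sketch is missing.

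The equality is also underspecified in a way that matters. Your ``absorb the discrepancy into the weight via H\"older and factorization'' only runs in one direction: the paper's construction is $v=(M(f^s))^{(p-q)/s}w$, which lies in $A_q$ and satisfies $\int|f|^qv\le\int|f|^pw$ precisely because $p<q$ makes the exponent $(p-q)/s$ negative, so that $|f|\le M(f^s)^{1/s}$ a.e.\ gives $(M(f^s))^{(p-q)/s}\le|f|^{p-q}$. For $p>q$ this inequality reverses and no such pointwise absorption is available; a naive global H\"older argument does not produce an $A_q$ majorant either. The paper instead invokes the embedding \eqref{emb1} of Proposition \ref{LEmb} with $r=-n/p$ to drop $f\in L_{p,w}(\rn)$ into $L_{\tilde q,(1+|x|)^{-\alpha}}(\rn)$ with $\tilde q<q$, and then applies the $p<q$ half. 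You cite Proposition \ref{LEmb} only for the inclusion into $\bigcup_{w\in A_1}L_{\gamma,w}(\rn)$, not where it is actually indispensable, namely in the case $p>q$ of the equality. The reduction to $\gamma=1$ and the trivial ``$\subseteq$'' via $p=q$ are fine and match the paper.
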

\begin{proof} Again it is enough to prove the case $\gamma=1$. Once this is proved, the general case is deduced from the observation that $f\in \Lpw$ with $w\in A_{p/\gamma}$ is equivalent to $|f|^\gamma\in L_{p/\gamma, w}(\rn)$ (with $w\in A_{p/\gamma}$).

Let  $\gamma=1$. We start showing the equality in \eqref{DefOp}. 
	We first  prove that if $f$ is in $\Lpw$ for $1<p<q$ and $w\in A_p$, then $f\in \bigcup_{v\in A_q} L_{q,v}(\rn)$.
	For $s>1$ sufficient small $w\in A_{p/s}$ holds. Moreover, we have $f^s \in L_{p/s,w}$ and therefore $M(f^s)\in L_{p/s,w}$. Hence $(M(f^s))^{1/s}\in A_1$. Thus, $(M(f^s))^{(p-q)/s}w\in A_{q}$ (see \cite[Lemma 2.1]{D11}).  
	By reason of $p<q$ finally it holds that
	\[
	  \int_{\rn} |f|^q (M(f^s))^{\frac{1}{s} (p-q)}w \le \int_{\rn} |f|^q |f|^{p-q}w
	\]
and $f\in \bigcup_{v\in A_q} L_{q,v}(\rn)$. 
That means that we have up to now shown
\begin{\eq}\label{half}
  \bigcup_{w\in A_q} L_{q,w}(\rn) = \bigcup_{1< p\le q} \bigcup_{w\in A_p} \Lpw.
\end{\eq}	
We assume now $f\in \Lpw$ for some $p>q$ and $w\in A_p$. By \eqref{emb1} (applied to $r=-n/p$, since ) we obtain $f\in L_{\tilde q,(1+|x|)^{-\alpha}}(\rn)$ for some $\tilde q<q$ and so $f$ is in the right-hand side of \eqref{half}. Here, $\tilde q=1$ is also admitted.

It remains to prove that the inclusion of \eqref{DefOp} is strict. For this it is enough to notice that there exists  $f\in L^1(\rn)$ for which $Mf$ is not locally integrable. Indeed, such an $f$  cannot be in $L_{p,w}(\rn)$ for any $p\in (1,\infty)$ because this would imply $Mf \in L_{p,w}(\rn)\subset L_1^{\textrm{loc}}(\rn)$. A typical example of a function as the one required here is 
$f(x)= |x|^{-n} (\ln{|x|})^{-2}\ \chi_{\{|x|\le 1/2\}}$. 
\end{proof}

The following embedding result on the considered Morrey spaces is a generalization of \cite[Prop.~2.10]{T-HS} to weighted situations.
\begin{Prop} \label{LEmbE}
Let $1\le p<\infty$ and $\nu \ge 1$. Let $w\in A_1\cap RH_\sigma$ (that is, $w^\sigma\in A_{1}$) for some $\sigma>\nu$. Then for $r\in [-n/p, -n\sigma'/(p\nu')]$ (if $\nu=1$, the endpoint $r=0$ is excluded), there exists $s_0>\nu$ such that for $\nu<s\le s_0$ and for any cube $Q$, the continuous embedding 
\begin{\eq} \label{emb1Enew}
  \Lmwe \hra L_{p,M(w^s\chi_Q)^{1/s}}(\rn) 
\end{\eq}
holds with a constant depending on $Q$ as $w(Q)^{1/p+r/n}$.

Moreover, whenever  
$r\in [-n/p,-n(1/\sigma + 1/\nu')/p]$, it also holds that
\begin{\eq} \label{emb2Enew}
  \SLmwe \hra L_{p,M(w^s\chi_Q)^{1/s}}(\rn), 
\end{\eq}
with a constant depending on $Q$ as $|Q|^{1/p+r/n}$.

As a consequence, under the same conditions on $w$ and $r$, there exists some $0<\alpha<n/\nu$ such that
\begin{\eq} \label{emb1E}
  \Lmwe,  \SLmwe \hra L_{p,(1+|x|)^{-\alpha}}(\rn). 
\end{\eq}
\end{Prop}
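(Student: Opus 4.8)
The plan is to follow very closely the architecture of the preceding Proposition~\ref{LEmb}, since the structure of the two statements is identical (a weighted embedding into $L_{p,M(w^s\chi_Q)^{1/s}}$, a strong-type version for $\SLmwe$ under a further range restriction on $r$, and the corollary \eqref{emb1E} obtained by specializing $Q$ to the unit cube). The essential difference is that here the test function $h$ has been dropped: the weight is built from $w^s\chi_Q$ alone rather than $h^sw^s\chi_Q$, and correspondingly the integration exponent on $f$ is the \emph{same} $p$ on both sides. This reflects the fact that we are in the regime $w\in A_1\cap RH_\sigma$, so no auxiliary duality weight $h$ is needed; instead the reverse H\"older exponent $\sigma$ and the parameter $\nu$ govern the admissible range of $r$. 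I would first fix the parameters: choose $s_0>\nu$ so that $w^s\in A_1$ for $\nu<s\le s_0$ (possible since $w^\sigma\in A_1$ and, by openness via Gehring's lemma, $w^\tau\in A_1$ for $\tau$ slightly above $\sigma$, hence certainly for $s$ a bit above $\nu<\sigma$), and verify $M(w^s\chi_Q)^{1/s}$ is a genuine $A_1$ weight by the standard construction $Mg^\delta\in A_1$.

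Next I would decompose the integral $\int_{\rn} f^p\, M(w^s\chi_Q)^{1/s}$ into the part over $2Q$ and the part over $\rn\setminus 2Q$, exactly as in the previous proof. Over $2Q$ one bounds $M(w^s\chi_Q)^{1/s}$ pointwise by $(Mw^s)^{1/s}\le c\,w$ using $w^s\in A_1$ (this is where the $A_1\cap RH_\sigma$ hypothesis pays off, replacing the H\"older step that previously used $h$), so that $\int_{2Q} f^p M(w^s\chi_Q)^{1/s}\le c\int_{2Q}f^p w\le c\,w(2Q)^{1+pr/n}\norm{f}{\Lmwe}^p$, and then \eqref{DC} replaces $w(2Q)$ by $w(Q)$. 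Over each annulus $2^{i+1}Q\setminus 2^iQ$ with $i\ge1$, I would use that $M(w^s\chi_Q)(x)$ is comparable to the average $|2^iQ|^{-1}\int_Q w^s$, and then control $\int_Q w^s$ through the reverse H\"older inequality of $w\in RH_\sigma$ together with the $A_1$ condition on $w$. The resulting series, after invoking the growth/decay estimate \eqref{ineqRH} (equivalently \eqref{p12}) for $w(2^{i+1}Q)/w(Q)$, reduces to a geometric series whose convergence is exactly the condition $r\le -n\sigma'/(p\nu')$ (for the $\Lmwe$ case) and $r\le -n(1/\sigma+1/\nu')/p$ (for the $\SLmwe$ case), with the endpoint recovered from the openness of the $RH_\sigma$ classes as in the earlier argument.

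For the $\SLmwe$ version I would run the same annular decomposition but normalize by $|2^{i+1}Q|^{1/p+r/n}$ instead of $w(2^{i+1}Q)^{1/p+r/n}$, which introduces an additional factor $[w(Q)/w(2^{i+1}Q)]^{1/\sigma}$-type term; this shifts the exponent in the geometric series and yields the slightly different endpoint $-n(1/\sigma+1/\nu')/p$. Finally, for the corollary \eqref{emb1E}, I would take $Q$ the unit cube and use that $M(w^s\chi_Q)(x)$ is bounded below by a positive constant on $2Q$ and comparable to $c|x|^{-n}$ (up to the $w^s$-mass, which is a fixed constant) for $x\notin 2Q$, so that $M(w^s\chi_Q)^{1/s}(x)\gtrsim (1+|x|)^{-n/s}$; setting $\alpha=n/s$ and noting $s>\nu$ gives $\alpha<n/\nu$, which is the claimed range.

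I expect the main obstacle to be bookkeeping the interplay between the two exponents $\nu$ and $\sigma$ so that the geometric-series exponents come out to the \emph{exact} endpoints stated (the appearance of $\sigma'$ and $\nu'$ in the $\Lmwe$ bound versus $1/\sigma+1/\nu'$ in the $\SLmwe$ bound). Concretely, the delicate point is estimating $\int_Q w^s$ and re-expressing it in terms of $w(Q)$ and $|Q|$: one must combine $\left(|Q|^{-1}\int_Q w^s\right)^{1/s}\lesssim \left(|Q|^{-1}\int_Q w^\sigma\right)^{1/\sigma}\lesssim |Q|^{-1}w(Q)$ (reverse H\"older, valid since $s<\sigma$) carefully, track how the $1/s$ power distributes across the annular sum, and then choose $s$ close enough to $\nu$ so that the surplus in the exponent is absorbed and the endpoint in $r$ is attained via openness. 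Keeping these powers aligned with the two target inequalities is the only genuinely fiddly part; everything else is a direct transcription of the Proposition~\ref{LEmb} argument with $h\equiv 1$.
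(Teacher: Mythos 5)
Your proposal is correct and follows essentially the same route as the paper's proof: the pointwise bound $M(w^s\chi_Q)^{1/s}\le C w$ from $w^s\in A_1$ for the local part, the annular decomposition with $M(w^s\chi_Q)$ comparable to the average of $w^s\chi_Q$ over $2^{i+1}Q$, the reverse H\"older control of $w^s(Q)$ and the $A_1$/\eqref{p12} estimates leading to the two geometric-series conditions, with the endpoints in $r$ recovered by openness of the reverse H\"older classes and $\alpha=n/s$ for \eqref{emb1E}. No substantive differences.
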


\begin{proof}
Let $s$ be such that $\nu<s<\sigma$. The weight $M(w^s\chi_Q)^{1/s}$ is in $A_1$ and satisfies  $M(w^s\chi_Q)^{1/s}\le M(w^s)^{1/s}\le C w$ a.e.\ because $w^s\in A_1$. Moreover, for $x\in 2^{i+1} Q\setminus 2^i Q$, $M(w^s \chi_{Q}) (x)$ is comparable to the average of $w^s \chi_{Q}$ over $2^{i+1} Q$. 

Let $f\in  \Lmwe$ with $r$ as stated and assume that $f$ is nonnegative. Then 
\begin{equation*} 
	\int_{\rn} f^p M(w^s\chi_Q)^{\frac 1s}
	\le c_1 \int_{2Q} f^p w +  c_2\sum_{i=1}^\infty \int_{2^{i+1} Q\setminus 2^i Q} f^p\left( \frac{w^s(Q)}{|2^{i}Q|} \right)^\frac{1}{s}.
\end{equation*}%
The first term gives immediately the desired estimate. As for the second one, we notice that 
\begin{equation*}
\left(\frac {w^s(Q)}{|Q|}\right)^{1/s}\le C\frac {w(Q)}{|Q|},
\end{equation*}  
because $w\in RH_\sigma\subset RH_s$ and also that $w\in A_1$ implies 
\begin{equation*}
\frac{w(2^{i+1}Q)} {|2^{i+1}Q|}\le Cw(x)\quad\text{a.e. }x\in 2^{i+1}Q.
\end{equation*}
Inserting these in the above expression  we obtain 
\begin{align} 
\begin{split} \label{p4E}
	\sum_{i=1}^\infty & \int_{2^{i+1} Q\setminus 2^i Q} f^p w \ \frac{|2^{i+1}Q|}{w(2^{i+1}Q)} \frac{w(Q)}{2^{in/s} |Q|}
  \\ & \le  c_1  \norm{f}{\Lmwe}^p \sum_{i=1}^\infty {w(2^{i+1}Q)}^{\frac{rp}{n}} w(Q) 2^{i\frac{n}{s'}}
  \\ & \le  c_2  \norm{f}{\Lmwe}^p {w(Q)}^{1+\frac{rp}{n}}\sum_{i=1}^\infty {2}^{i\frac{rp}{\sigma'}}2^{i\frac{n}{s'}},
\end{split} 
\end{align}
using that $w\in RH_{\sigma}$ and \eqref{p12}. If $rp<-n\sigma'/\nu'$, we can choose $s>\nu$ sufficiently small such that the geometric series converges. To reach the endpoint $rp=-n\sigma'/\nu'$, use the openness of the reverse H\"older classes.

In the case of the spaces $\SLmwe$, the left-hand side of \eqref{p4E} is bounded by  
\begin{equation*}
C \norm{f}{\SLmwe}^p \sum_{i=1}^\infty (2^{in} |Q|)^{1+\frac {rp}n} 2^{\frac{in}{s'}} \frac{w(Q)}{w(2^{i+1}Q)}.
\end{equation*}
Using \eqref{p12} the geometric series converges for $rp+n(1/s'+1/\sigma)<0$. This gives \eqref{emb2Enew} in the stated range of $r$ (using again the openness of the reverse H\"older classes to get the endpoint).

For the particular embeddings \eqref{emb1E} take as $Q$ the unit cube centered at the origin and $\alpha=n/s$.
\end{proof}
\begin{Rem} 
Since we assume $w^\nu\in A_{1}$, we automatically have $w\in RH_{\nu}$. But each such $w$ will be in a reverse H\"older class  $w\in RH_{\sigma}$ for some $\sigma>\nu$ and the range of values of $r$ obtained in the proposition is not empty.
In the case $w(x)=|x|^\beta$ or $w(x)=(1+|x|)^\beta$, we need $-n<\beta\nu\le 0$ to fulfill the condition $w^\nu\in A_{1}$ and in such case the range of values of $r$ satisfying the two embeddings of \eqref{emb1E} are respectively
\[
		-\frac{n}{p} \le r<-\frac{n}{p} \frac{n}{n+\beta} \frac{1}{\nu'} \quad\text{and}\quad -\frac{n}{p} \le r<\frac{\beta}{p} -\frac{n}{p}\frac{1}{\nu'}. 	
\]
\end{Rem}

\begin{Corol}\label{newcor}
 For every $1\le \gamma\le p_0<\infty$, every $\gamma<p<\infty$ (and also $p=p_0$ if $\gamma=p_0$), $-n/p\le r<0$ and $w\in A_{p/\gamma}$, it holds that
 \begin{equation}\label{arb}
\Lmwe\subset \bigcup_{u\in A_{p_0/\gamma}}L_{p_0,u}(\rn).
\end{equation} 
If moreover $w\in A_{p/\gamma}\cap RH_{\sigma}$, for every $-n/p\le r< -n/(p \sigma) $ it holds that
 \begin{equation}\label{arb2}
\SLmwe\subset \bigcup_{u\in A_{p_0/\gamma}}L_{p_0,u}(\rn).
\end{equation} 
\end{Corol}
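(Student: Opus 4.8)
The plan is to obtain the corollary directly from the embeddings \eqref{emb1} and \eqref{emb1E} combined with the $q$-invariance of the union $\bigcup_{u\in A_{q/\gamma}}L_{q,u}(\rn)$ established in Proposition \ref{DfOp}. The key observation that links the two is that the weight $(1+|x|)^{-\alpha}$ occurring in those embeddings belongs to $A_1$ whenever $0<\alpha<n$ (by Remark \ref{Ma3}, since then $-n<-\alpha\le 0$), and hence to $A_{q/\gamma}$ for every $q\ge\gamma$. So each embedding puts $f$ into a weighted $L_q$ space whose weight is automatically admissible.

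I first treat \eqref{arb} in the main case $\gamma<p$. Fix $f\in\Lmwe$. By \eqref{emb1} there are $q_0\in(\gamma,p)$ and $0<\alpha<n$ such that $f\in L_{q,(1+|x|)^{-\alpha}}(\rn)$ for every $q\in[\gamma,q_0]$. Choosing $q=q_0>\gamma$ and using $(1+|x|)^{-\alpha}\in A_1\subset A_{q/\gamma}$, I get $f\in\bigcup_{u\in A_{q/\gamma}}L_{q,u}(\rn)$. Now Proposition \ref{DfOp}, applicable since $\gamma<q<\infty$, shows this union is independent of $q$ and is contained in $\bigcup_{u\in A_1}L_{\gamma,u}(\rn)$. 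If $p_0>\gamma$, the union for $q$ coincides with the one for $p_0$, giving $f\in\bigcup_{u\in A_{p_0/\gamma}}L_{p_0,u}(\rn)$; if $p_0=\gamma$, the inclusion gives $f\in\bigcup_{u\in A_1}L_{\gamma,u}(\rn)=\bigcup_{u\in A_{p_0/\gamma}}L_{p_0,u}(\rn)$. Either way \eqref{arb} follows.

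The only remaining instance of \eqref{arb} is $\gamma=p_0=p$, where Proposition \ref{LEmb} does not apply (it requires $\gamma<p$). Here $w\in A_{p/\gamma}=A_1$, and since $A_1$ weights satisfy a reverse Hölder inequality, $w\in A_1\cap RH_\sigma$ for some $\sigma>1$. I then invoke Proposition \ref{LEmbE} with $\nu=1$, whose admissible range is precisely $-n/\gamma\le r<0$: the embedding \eqref{emb1E} yields $f\in L_{\gamma,(1+|x|)^{-\alpha}}(\rn)$ for some $0<\alpha<n/\nu=n$, hence $f\in\bigcup_{u\in A_1}L_{\gamma,u}(\rn)=\bigcup_{u\in A_{p_0/\gamma}}L_{p_0,u}(\rn)$. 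The proof of \eqref{arb2} is identical, replacing $\Lmwe$ by $\SLmwe$ and using the $\SLmwe$ branches of the same embeddings: when $\gamma<p$, the hypotheses $w\in A_{p/\gamma}\cap RH_\sigma$ and $-n/p\le r<-n/(p\sigma)$ fall within the range $r\le -n/(p\sigma)$ for which \eqref{emb1} holds for $\SLmwe$, and when $\gamma=p_0=p$ the assumptions $w\in A_1\cap RH_\sigma$ and $r<-n/(\gamma\sigma)$ lie in the range of the $\SLmwe$ part of \eqref{emb1E} with $\nu=1$.

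I expect the only genuinely delicate point to be the endpoint $p=\gamma$ (which can occur only when $p_0=\gamma$), since it is not covered by Proposition \ref{LEmb} and must be routed through Proposition \ref{LEmbE}; verifying that the relevant $r$-range there reduces to $r<0$ (resp.\ $r<-n/(\gamma\sigma)$) when $\nu=1$ is what makes this case go through. Everything else is bookkeeping: an embedding into a weighted $L_q$ space with an $A_1$ (hence $A_{q/\gamma}$) weight, followed by the $q$-invariance of the union from Proposition \ref{DfOp}.
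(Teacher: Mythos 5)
Your proposal is correct and follows essentially the same route as the paper: the embeddings of Propositions \ref{LEmb} and \ref{LEmbE} (with $\nu=1$ for the endpoint $p=\gamma=p_0$) combined with the $q$-invariance of the union in Proposition \ref{DfOp}. The only cosmetic difference is that in the subcase $\gamma=p_0<p$ the paper takes $q=\gamma$ in Proposition \ref{LEmb} to land directly in $\bigcup_{u\in A_1}L_{\gamma,u}(\rn)$, whereas you take $q=q_0>\gamma$ and then use the strict inclusion from Proposition \ref{DfOp}; both are valid.
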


\begin{proof}
If $1\le \gamma< p_0$ we apply the embedding for $\Lmwe$ given by \eqref{emb1new} of Proposition \ref{LEmb} together with Proposition  \ref{DfOp}. If $\gamma=p_0$ and $p=p_0=\gamma$, \eqref{arb} follows from Proposition \ref{LEmbE} with $\nu=1$. Finally, if $\gamma=p_0$ and $p_0=\gamma<p<\infty$ we use the embedding for $\Lmwe$ of Proposition \ref{LEmb}  with $q=\gamma$.  Analogously, we obtain \eqref{arb2}.
\end{proof}

\begin{Rem}
 Note that as a consequence of this corollary, the left-hand side of \eqref{DefOp} in Proposition \ref{DfOp} also coincides with the (formally larger) union 
 \[\bigcup_{\gamma<p<\infty} \bigcup_{-n/p\le r<0} \bigcup_{w\in A_{p/\gamma}} \Lmwe.\]
(We recall that ${L}^{-{n}/{p}}_p(w,\rn)=L_{p, w}(\rn)$.)
\end{Rem}

\begin{Prop} \label{LEmbP2}
Let $1\le \gamma< b<\infty$ and $\sigma(p)=(b/p)'$ for $\gamma<p<b$. Let $w\in A_{p/\gamma}\cap RH_{\sigma}$ for $\sigma>\sigma(p)$. Let $r\in \left[-n/p , -n\sigma'/b\right]$. Then there exist $q_0\in(\gamma, p)$ and for each $q\in [\gamma,q_0]$, some $s_0(q)>\sigma (q)$, such that for $\sigma(q)<s\le s_0(q)$, each cube $Q$ and $h\in L_{(p/q)',w}(Q)$ with norm $1$, the continuous embedding
\begin{\eq} \label{emb11new}
  \Lmwe \hra L_{q,M(h^sw^s\chi_Q)^{1/s}}(\rn)
\end{\eq} 
holds with constant independent of $h$ and depending on $Q$ as $w(Q)^{1/p+r/n}$.

Furthermore, with the additional assumption $r\in \left[-n/p ,-n/(p\sigma) - n/b \right]$, it also holds that
\begin{\eq} \label{emb22new}
  \SLmwe \hra L_{q,M(h^sw^s\chi_Q)^{1/s}}(\rn)
\end{\eq} 
 with constant independent of $h$ and depending on $Q$ as $|Q|^{1/p+r/n}$.

As a consequence, under the same conditions on $p$, $r$ and $w$, there exists $0<\alpha<n/\sigma (q)$ such that 
\begin{\eq} \label{emb1P2}
\Lmwe,  \SLmwe \hra L_{q,(1+|x|)^{-\alpha}}(\rn). 
\end{\eq}
\end{Prop}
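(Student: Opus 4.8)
The plan is to follow the same scheme as in the proof of Proposition~\ref{LEmb}, reducing to the case $\gamma=1$ via the identity \eqref{TFE} and the equivalence $f\in\Lmwe \iff |f|^\gamma\in L^{r\gamma}_{p/\gamma}(w,\rn)$, and then splitting the integral $\int_{\rn}f^q M(h^sw^s\chi_Q)^{1/s}$ into the contribution over $2Q$ and the contribution over $\rn\setminus 2Q$. The essential new feature here, compared with Proposition~\ref{LEmb}, is that the admissible range of $s$ starts at $\sigma(q)=(b/q)'$ rather than at $1$; this is exactly what forces the more restrictive upper bound on $r$ and is where the reverse H\"older exponent $\sigma$ enters the geometric-series condition. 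First I would fix $\gamma=1$, choose $q_0\in(1,p)$ with $w\in A_{p/q_0}$, set $\tilde p=p/q$, and select $s$ with $\sigma(q)<s<\tilde p'$ so that $w^{1-\tilde p'}\in A_{\tilde p'/s}$; the key auxiliary estimate \eqref{p3} controlling $\bigl(\int_Q h^sw^s\bigr)^{1/s}$ carries over verbatim, since it only used that $w^{1-\tilde p'}\in A_{\tilde p'/s}$.

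For the local part over $2Q$ I would apply H\"older's inequality with exponents $p/q$ and $(p/q)'=\tilde p'$ exactly as before, bounding the first factor by $w(Q)^{1/p+r/n}\norm{f}{\Lmwe}$ (using the doubling inequality \eqref{DC}) and the second by the boundedness of $M$ on $L_{\tilde p'/s,w^{1-\tilde p'}}(\rn)$, which returns the norm of $h$, namely $1$. The term over $\rn\setminus2Q$ is treated by replacing $M(h^sw^s\chi_Q)(x)$ on each annulus $2^{i+1}Q\setminus2^iQ$ by the average of $h^sw^s\chi_Q$ over $2^{i+1}Q$ and then reusing the chain of estimates in \eqref{p4}, so that matters reduce, after invoking \eqref{p6b}, to the convergence of a series of the form \eqref{p5}. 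Applying the reverse H\"older consequence \eqref{p12}, i.e. $w(Q)/w(2^iQ)\le c\,2^{-in/\sigma'}$, the summand becomes a geometric term with ratio controlled by an exponent of the form $r/n+1/(s'q)+\cdots$; convergence then holds precisely when $r$ satisfies the stated bound, and the lower endpoint $r=-n\sigma'/b$ is recovered from Gehring's lemma (openness of $RH_\sigma$), which lets one enlarge $\sigma$ slightly and shrink $s$ toward $\sigma(q)$. The strong-space estimate \eqref{emb22new} is obtained by the parallel computation for $\SLmwe$, replacing $w(Q)^{1/p+r/n}$ by $|Q|^{1/p+r/n}$ and again inserting \eqref{p12}, which leads to the convergence condition $rp+n/(p\sigma)+n/b<0$ governing \eqref{emb22new}.

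The passage from $\gamma=1$ to general $1\le\gamma<b$ is then formal: since $|f|^\gamma\in L^{r\gamma}_{p/\gamma}(w,\rn)$ with $w\in A_{(p/\gamma)}$ and the relevant reverse H\"older exponent $\sigma(p)=(b/p)'$ is exactly the one appearing after the rescaling $p\mapsto p/\gamma$, $b\mapsto b/\gamma$, I would apply the $\gamma=1$ case to $|f|^\gamma$ and then set $q=q^*\gamma$, $q_0=q_0^*\gamma$ to recover \eqref{emb11new} and \eqref{emb22new}. Finally, \eqref{emb1P2} follows by specializing $Q$ to the unit cube and $h=cw^{-1}$ (with $c$ normalizing the $L_{(p/q)',w}(Q)$-norm to $1$), using that $M(\chi_Q)(x)\gtrsim|x|^{-n}$ off $2Q$ and taking $\alpha=n/s<n/\sigma(q)$, just as in Proposition~\ref{LEmb}.

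I expect the main obstacle to be bookkeeping of the two competing constraints on $s$: on one hand $s>\sigma(q)=(b/q)'$ must hold for the averaging argument and for the relation between the weighted exponents to be admissible, and on the other $s$ must be taken close to its lower value $\sigma(q)$ to make the geometric series \eqref{p5} (and its $\SLmwe$ analogue) converge for $r$ as large as the stated endpoint. Verifying that these two requirements are simultaneously satisfiable, and that the endpoint values $r=-n\sigma'/b$ and $r=-n/(p\sigma)-n/b$ are genuinely attained through the openness of the reverse H\"older classes rather than merely approached, is the delicate point; the rest is a careful but routine transcription of the argument for Proposition~\ref{LEmb}.
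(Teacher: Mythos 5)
Your overall scheme coincides with the paper's: reduce to $\gamma=1$, split into the contributions over $2Q$ and $\rn\setminus 2Q$, reuse \eqref{p3}--\eqref{p5} and \eqref{p12}, recover the endpoints of the $r$-range by openness of the reverse H\"older classes, and rescale via $|f|^\gamma$ at the end. But there is one genuine gap, and it is exactly the point you flag as ``delicate'' and then leave unresolved: you write ``select $s$ with $\sigma(q)<s<\tilde p'$ so that $w^{1-\tilde p'}\in A_{\tilde p'/s}$'' as if such an $s$ were available for free. In Proposition \ref{LEmb} the analogous choice is free because one only needs \emph{some} $s>1$, and $w^{1-\tilde p'}\in A_{\tilde p'}$ together with the openness of the $A_p$ classes does the job. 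Here you need $s$ strictly above $\sigma(q)=(b/q)'>1$, and the existence of such an $s$ is precisely where the hypothesis $\sigma>\sigma(p)$ must be used; without verifying it the argument has no starting point, since both the auxiliary estimate \eqref{p3} and the H\"older step over $2Q$ rest on $w^{1-\tilde p'}\in A_{\tilde p'/s}$.

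The paper closes this with the Johnson--Neugebauer characterization \eqref{apandrh}: $w\in A_{\tilde p}\cap RH_{\sigma}$ is equivalent to $w^{\sigma}\in A_{\sigma(\tilde p-1)+1}$, which by duality of weights gives $w^{1-\tilde p'}\in A_{1+(\tilde p'-1)/\sigma}$; one then checks that the inequality $1+(\tilde p'-1)/\sigma<\tilde p'/\sigma(q)$ reduces, after dividing by $\tilde p'$, to $1/\sigma<1-p/b=1/\sigma(p)$, i.e.\ exactly to $\sigma>\sigma(p)$, so that $A_{1+(\tilde p'-1)/\sigma}\subset A_{\tilde p'/s}$ for some $s>\sigma(q)$. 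You should supply this computation (or an equivalent one). Everything else in your proposal --- the reduction of \eqref{emb11new} to \eqref{p5}, the convergence condition $r<-n\sigma'/(qs')$ and its $\SLmwe$ analogue (your ``$rp+n/(p\sigma)+n/b<0$'' should read $r+n/(p\sigma)+n/b<0$, a typo), the limit $s\downarrow\sigma(q)$ and $q\downarrow\gamma$ to reach the stated endpoints via Gehring's lemma, the rescaling $p\mapsto p/\gamma$, $b\mapsto b/\gamma$, and the choice of $Q$ the unit cube, $h=cw^{-1}$, $\alpha=n/s<n/\sigma(q)$ for \eqref{emb1P2} --- matches the paper's proof.
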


\begin{proof}
We can consider $\gamma=1$. Once this is proved, the case $\gamma>1$ is obtained from it by considering $|f|^\gamma$ as in Proposition \ref{LEmb} (see \eqref{TFE}).
 
Let $f\in \Lmwe$ nonnegative with  $w\in A_p\cap RH_{\sigma}$. 
Choose $q\in(1,p)$ such that $w\in A_{p/q}$ and set $\tilde p=p/q$. By \eqref{apandrh}, $w\in A_{\tilde p}\cap RH_{\sigma}$ implies $w^\sigma \in A_{\sigma(\tilde p-1)+1}$, which in turn implies $w^{1-\tilde p'}\in A_{1+(\tilde p'-1)/\sigma}$ by the duality property of the weights. If $1+(\tilde p'-1)/\sigma\le \tilde p'/s$, then $w^{1-\tilde p'}\in A_{\tilde p'/s}$. This is possible with $s>\sigma (q)$ if
\begin{equation*}
1+\frac{\tilde p'-1}{\sigma}<\frac{\tilde p'}{\sigma(q)}=\frac{\tilde p'}{(b/q)'},
\end{equation*}
which is the same as
\begin{equation*}
\frac 1{\tilde p'}+\frac1{\tilde p \sigma}<\frac{1}{(b/q)'}= 1-\frac qb,
\end{equation*} 
that is, $\sigma>\sigma(p)$.
 
The proof continues as for Proposition \ref{LEmb}. In particular, the estimate \eqref{p3} remains true because $w^{1-\tilde p'}\in A_{\tilde p'/s}$, so that we will get \eqref{emb11new} if \eqref{p5} holds. Since $w\in  RH_{\sigma}$, we use \eqref{p12} and the left-hand side of \eqref{p5} is bounded by
\begin{equation*} 
 c \, w\left( Q \right)^{\left(\frac{1}{p}+\frac{r}{n}\right)q} \sum_{i=1}^\infty 2^{i\left(\frac{rq}{\sigma'}+\frac n{s'}\right)},\end{equation*}
The geometric series converges for $r < -n\sigma'/(qs')$. It is possible to have $s>\sigma(q)$, if $r<-n\sigma'/b$. For the endpoint $r=-n\sigma'/b$, use the openness property of the reverse H\"older classes.

In the case of the spaces $\SLmwe$, to get \eqref{emb22new} we follow again the proof of Proposition \ref{LEmb} and we are left with \eqref{p10}. We insert \eqref{p12} and the geometric series converges if
\begin{equation*}
\frac{nq}p+r+\frac{n}{s'}-\frac{nq}{p\sigma'}=\frac{nq}{p\sigma}+r+\frac{n}{s'}<0.
\end{equation*}
We can get $s>\sigma(q)$ if 
\begin{equation*}
r<-\frac{nq}{p\sigma}-\frac{n}{\sigma(q)'}=-\frac{nq}{p\sigma}-\frac{nq}{b}.
\end{equation*}
This holds for $q$ close to $1$ if $r<-n/(p\sigma) - n/b$. 

Take as $Q$ the unit cube and as $h=cw^{-1}$ to get the embeddings of \eqref{emb1P2} with $\alpha=n/s$.  
\end{proof}
\begin{Prop} \label{LEmbP2B}
Let $1\le \gamma< b<\infty$ and $\sigma(p)=(b/p)'$ for $\gamma<p<b$. Let $\gamma<q<b$.
Then
\begin{\eq*} 
  \bigcup_{w\in A_{q/\gamma}\cap RH_{\sigma(q)}} L_{q,w}(\rn) = \bigcup_{\gamma< p<b} \bigcup_{w\in A_{p/\gamma}\cap RH_{\sigma(p)}} \Lpw. 
\end{\eq*}
\end{Prop}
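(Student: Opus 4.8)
The plan is to follow the pattern of the proof of Proposition~\ref{DfOp}, carrying the reverse Hölder exponents along the way. First I would reduce to $\gamma=1$: if $f\in\Lpw$ with $w\in A_{p/\gamma}\cap RH_{\sigma(p)}$, then $|f|^\gamma\in L_{p/\gamma,w}(\rn)$, and since $\sigma(p)=(b/p)'=\bigl((b/\gamma)/(p/\gamma)\bigr)'$, the substitution $f\mapsto|f|^\gamma$ (together with $p\mapsto p/\gamma$, $q\mapsto q/\gamma$, $b\mapsto b/\gamma$) turns the assertion into its own $\gamma=1$ case, exactly as in \eqref{TFE}. So I assume $\gamma=1$ and write $\sigma(p)=(b/p)'$. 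The inclusion of the left-hand side in the right-hand side is then trivial, because the term $p=q$ of the right-hand union is literally the left-hand side (recall $q\in(1,b)$). Hence the whole content is the reverse inclusion: for every $p\in(1,b)$ and $w\in A_p\cap RH_{\sigma(p)}$ one must show $\Lpw\subset\bigcup_{v\in A_q\cap RH_{\sigma(q)}}L_{q,v}(\rn)$.

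As in Proposition~\ref{DfOp}, I would split this into a \emph{going up} step ($1<p\le q$) and a \emph{going down} step ($q<p<b$), first establishing the analogue of \eqref{half}, namely
\[
\bigcup_{v\in A_q\cap RH_{\sigma(q)}}L_{q,v}(\rn)=\bigcup_{1<p\le q}\bigcup_{w\in A_p\cap RH_{\sigma(p)}}\Lpw.
\]
For the nontrivial (going up) inclusion here I take $f\in\Lpw$ with $1<p\le q$ and mimic the construction of Proposition~\ref{DfOp}: for $s>1$ close to $1$ with $w\in A_{p/s}$ one has $(M(f^s))^{1/s}\in A_1$, and the weight $v:=w\,(M(f^s))^{(p-q)/s}$ satisfies $\int_{\rn}|f|^q v\le\int_{\rn}|f|^q|f|^{p-q}w=\int_{\rn}|f|^p w<\infty$, because $(M(f^s))^{1/s}\ge|f|$ a.e.\ and $p-q\le0$; thus $f\in L_{q,v}(\rn)$, and $v\in A_q$ by \cite[Lemma 2.1]{D11} exactly as there. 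The genuinely new point—and the step I expect to be the main obstacle—is to verify that this same $v$ lies in $RH_{\sigma(q)}$.

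To control the reverse Hölder exponent I would pass, through the Johnson--Neugebauer identity \eqref{apandrh}, to the equivalent requirement $v^{\sigma(q)}=w^{\sigma(q)}(M(f^s))^{\sigma(q)(p-q)/s}\in A_{\sigma(q)(q-1)+1}$. Here the factor $(M(f^s))^{1/s}\in A_1$ has its reciprocal in $RH_\infty$, so its negative power $(M(f^s))^{(p-q)/s}$ sits in a very good reverse Hölder class and should improve, rather than spoil, the behaviour contributed by $w$; combining this with $w\in RH_{\sigma(p)}$ (equivalently $w^{\sigma(p)}\in A_{\sigma(p)(p-1)+1}$) and taking $s$ close to $1$, I expect $v^{\sigma(q)}\in A_{\sigma(q)(q-1)+1}$, with the endpoint recovered from the openness (Gehring) of the reverse Hölder classes. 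This product-weight estimate, matching the exponents as $s\to1$ and possibly refining the auxiliary exponent in $v$ to secure exactly $RH_{\sigma(q)}$, is the technical heart of the argument.

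The going down step ($q<p<b$) I would handle exactly as the case $p>q$ in Proposition~\ref{DfOp}, but using the limited-range embedding in place of \eqref{emb1}. Applying Proposition~\ref{LEmbP2} with $r=-n/p$ (legitimate since $\Lpw=L^{-n/p}_p(w,\rn)$, and $-n/p<-n\sigma'/b$ because $\sigma'<b/p$, while Gehring upgrades $w\in RH_{\sigma(p)}$ to $RH_\sigma$ with $\sigma>\sigma(p)$), the consequence \eqref{emb1P2} provides some $\tilde q\le q$ and $0<\alpha<n/\sigma(\tilde q)$ with $\Lpw\hookrightarrow L_{\tilde q,(1+|x|)^{-\alpha}}(\rn)$. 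By Remark~\ref{Ma3} the weight $(1+|x|)^{-\alpha}$ lies in $A_{\tilde q}\cap RH_{\sigma(\tilde q)}$ precisely because $0<\alpha<n/\sigma(\tilde q)$, so $f$ belongs to the right-hand side of the going-up identity above and therefore to $\bigcup_{v\in A_q\cap RH_{\sigma(q)}}L_{q,v}(\rn)$. Combining the two steps yields the reverse inclusion and completes the proof.
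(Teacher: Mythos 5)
Your reduction to $\gamma=1$ and your treatment of the range $q<p<b$ (via Proposition \ref{LEmbP2} with $r=-n/p$ and the weight $(1+|x|)^{-\alpha}\in A_1\cap RH_{\sigma(\tilde q)}$) both match the paper. The gap is in the going-up step, and it is not merely an unverified claim: the weight you propose, $v=w\,(M(f^s))^{(p-q)/s}$, is in general \emph{not} in $RH_{\sigma(q)}$, so the step you defer cannot be completed. Your heuristic (that the negative power of the $A_1$ weight $(M(f^s))^{1/s}$ improves reverse H\"older behaviour) does not address the real obstruction, which comes from $w$ itself: $w$ is only assumed to be in $RH_{\sigma(p)}$, and since $p<q$ one has $\sigma(q)>\sigma(p)$, so $w$ need not be in $RH_{\sigma(q)}$; wherever $M(f^s)$ is essentially constant, $v$ inherits the local behaviour of $w$ and the reverse H\"older inequality with exponent $\sigma(q)$ fails. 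Concretely, take $w(x)=|x|^{\beta}$ with $-n/\sigma(p)<\beta\le -n/\sigma(q)$ (nonempty range), so that by Remark \ref{Ma3} $w\in A_1\cap RH_{\sigma(p)}$ while $w^{\sigma(q)}$ is not locally integrable at the origin, and let $f$ be the characteristic function of a small ball at positive distance from the origin; then $f\in\Lpw$, and on a neighbourhood of the origin $M(f^s)$ is comparable to a positive constant for every $s$, so $v\simeq c\,|x|^{\beta}$ there and $v^{\sigma(q)}\notin L_1^{\mathrm{loc}}(\rn)$, hence $v\notin RH_{\sigma(q)}$ by \eqref{apandrh}.

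The paper repairs exactly this point. Using the factorization theorem together with \eqref{apandrh} it writes $A_p\cap RH_{\sigma(p)}=\{u_0^{1-p/b}u_1^{1-p}:u_0,u_1\in A_1\}$, and then forms the maximal function of $fu_0^{-1/b}$ rather than of $f$: with $V=M((fu_0^{-1/b})^s)^{1/s}\in A_1$ one has $|f|\le V u_0^{1/b}$ a.e., the integral of $|f|^q$ against the new weight $u_0^{1-q/b}V^{p-q}u_1^{1-p}$ is dominated by $\int_{\rn}|f|^p u_0^{1-p/b}u_1^{1-p}$, and the identity $u_0^{1-q/b}V^{p-q}u_1^{1-p}=u_0^{1-q/b}\,(V^{\theta}u_1^{1-\theta})^{1-q}$ with $\theta=(q-p)/(q-1)\in(0,1)$ exhibits the new weight in the factored form with exponent $q$, hence in $A_q\cap RH_{\sigma(q)}$. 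In short, the factor $u_0^{1-p/b}$ carrying the reverse H\"older information must be peeled off and rescaled to $u_0^{1-q/b}$ before the $A_1$ majorant is formed; in your construction the exponent of $u_0$ stays at $1-p/b$, and the surplus $u_0^{(q-p)/b}$ cannot be absorbed into an $A_1$ weight raised to the power $1-q$. You would need to replace your choice of $v$ by the paper's (or prove an equivalent product-weight lemma) for the going-up step to go through.
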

\begin{proof}
We assume $\gamma=1$ as in the previous proposition. 
We notice first that using the factorization theorem of $A_p$ weights we have 
\begin{equation*}
A_p\cap RH_{\sigma(p)}=\{w: w^{\sigma(p)} \in A_{\sigma(p)(p-1)+1} \}
=\{u_0^{1-\frac pb}u_1^{1-p}: u_0, u_1\in A_1\}.
\end{equation*}

Let $f\in L_{p,w}(\rn)$ with  $w=u_0^{1-p/b}u_1^{1-p}$ for some $u_0, u_1\in A_1$, and let $1<p<q<b$. We have $fu_0^{-1/b}\in L^p (u_0 u_1^{1-p})$. Since $u_0 u_1^{1-p}\in A_p$, there exists $s>1$ such that $u_0 u_1^{1-p}\in A_{p/s}$. Then $(fu_0^{-1/b})^s\in L^{p/s} (u_0 u_1^{1-p})$. Hence $M((fu_0^{-1/b})^s)^{1/s}\in A_1$ and 
\begin{equation*}
|f(x)|\le M((fu_0^{-\frac 1b})^s)^{\frac 1s}(x) u_0^{\frac 1b}(x)\text{ a.e.}
\end{equation*}
We deduce that 
\begin{equation*}
\int_{\rn} |f|^q [M((fu_0^{-\frac 1b})^s)^{\frac 1s} u_0^{\frac 1b}]^{p-q} u_0^{1-\frac pb}u_1^{1-p} \le 
\int_{\rn} |f|^p u_0^{1-\frac pb}u_1^{1-p}.
\end{equation*}
The weight of the left side is  
\begin{equation*}
u_0^{1-\frac qb} [M((fu_0^{-\frac 1b})^s)^{\frac 1s}]^{p-q} u_1^{1-p}=u_0^{1-\frac qb} [v^\theta u_1^{1-\theta}]^{1-q},
\end{equation*}
where $v=M((fu_0^{-1/b})^s)^{1/s}$ and $\theta=(q-p)/(q-1) \in (0,1)$. Thus, it is in $A_q\cap RH_{\sigma(q)}$, since $v^\theta u_1^{1-\theta}\in A_1$. This proves that  
\begin{equation*}
\bigcup_{1< p\le q} \bigcup_{w\in A_p\cap RH_{\sigma(p)}} \Lpw =  \bigcup_{w\in A_q\cap RH_{\sigma(q)}} L_{q,w}(\rn).
\end{equation*}
This is enough to prove the statement because for $p>q$ we know from Proposition \ref{LEmbP2} (taking $r=-n/p$) that a function in $L_{p,w}(\rn)$ is in some $L_{\tilde q,(1+|x|)^{-\alpha}}(\rn)$ with $\tilde q$ near $1$ and the weight $(1+|x|)^{-\alpha}$ in $A_1 \cap RH_{\sigma(\tilde q)}\subset A_{\tilde q}\cap RH_{\sigma(\tilde q)}$.
\end{proof}

Our embeddings directly apply to similar results with respect to $A_p$-weighted weak $L_p(\rn)$ spaces ($W\Lpw$ is the space $W\Lmwe$ of Definition \ref{d1:def} for $r=-n/p$), which are nonseparable. These kind of embeddings allow one to extend operators to a domain which is a weak $L_p$ space via restriction. The connection with the Morrey spaces $\Lmwe$ lies in the use of a different norm for $W\Lpw$. 

For $1<p<\infty$ the space $W\Lpw$ can be normed with the norm
\begin{equation} \label{2:GG}
		\left\|f|W\Lpw\right\|_*\equiv  
\sup_{M} w(M)^{\left(\frac{1}{p}-\frac{1}{\tilde p} \right)} \left\|f|L_{\tilde p, w}(M)\right\|,
	\end{equation}	
where $0<\tilde p<p$ and the supremum is taken over all measurable subsets of $\real^n$ with $0<w(M)<\infty$. This norm is equivalent to the usual quasi-norm given in Definition \ref{d1:def} (\cite[Exercise 1.1.12]{G08}).	It is obvious from \eqref{2:GG} that
\begin{equation*}
W\Lpw \hra L_{\tilde p}^{r}(w,\rn),
\end{equation*}
with $r=-n/p>-n/\tilde p$.

Moreover, if $w\in A_{p/\gamma}$ for some $\gamma\in [1,p)$, $\tilde p$ can be chosen such that $w\in A_{\tilde p/\gamma}$, and Proposition \ref{LEmb} can be used to deduce that there exists $q_0>\gamma$ such that for each $q\in [\gamma,q_0]$ there is $0<\alpha<n$ for which
\begin{\eq*} 
  W\Lpw \hra L_{q,(1+|x|)^{-\alpha}}(\rn).
\end{\eq*} 
The same type of embeddings can be obtained from Propositions \ref{LEmbE} and \ref{LEmbP2}. 

\section{Boundedness on Morrey spaces: general results}\label{sec4}

In this section we obtain the general theorems giving the boundedness on the corresponding Morrey spaces from the assumption of weighted estimates on $\Lpw$ spaces. Instead of using operators, we can establish the general theorems in terms of pairs of functions, as it is usual in the recent presentations of the extrapolation theorems. This has the advantage of providing immediately several different versions.

\begin{Thm} \label{TE1}
Let $1\le p_0<\infty$ and let $\mathcal F$ be a collection of nonnegative measurable pairs of functions. Assume that for every $(f,g)\in \mathcal F$ and every $w\in A_{p_0}$ we have
\begin{\eq} \label{1exa1}
  \norm{g}{L_{p_0,w}(\rn)}\le c_1 \norm{f}{L_{p_0,w}(\rn)},
\end{\eq}
where $c_1$ does not depend on the pair $(f,g)$ and it depends on $w$ only in terms of $[w]_{A_{p_0}}$. 
Then for every
$1< p<\infty$ (and also for $p=1$ if $p_0=1$), every $-n/p \le r<0$ and every $w\in A_{p}$ we have
\begin{\eq} \label{2exa2}
  \norm{g}{\Lmwe}\le c_2 \norm{f}{\Lmwe}.
\end{\eq}

Furthermore, for every
$1< p<\infty$ (and also for $p=1$ if $p_0=1$) and $w\in A_{p} \cap RH_{\sigma}$, if $-n/p \le r\le-n/(p\sigma)$  we have
\begin{\eq} \label{3exa3}
  \norm{g}{\SLmwe}\le c_3 \norm{f}{\SLmwe}. 
\end{\eq}

The constants $c_2$ and $c_3$ in \eqref{2exa2} and \eqref{3exa3} 
do not depend on the pair $(f,g)$ but may depend on $w$ and the involved parameters.

If \eqref{1exa1} is assumed to hold only when the left-hand side is finite, \eqref{2exa2} and \eqref{3exa3} still hold assuming that their left-hand side is finite.
\end{Thm}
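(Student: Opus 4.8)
The plan is to reduce the Morrey estimates to weighted $L_q$ estimates at an auxiliary exponent $q$ close to $1$, where the embeddings of Proposition \ref{LEmb} apply, and to obtain those $L_q$ estimates from the hypothesis \eqref{1exa1} by the classical Rubio de Francia extrapolation theorem. Thus the first step is to invoke the scalar extrapolation theorem (\cite{CMP11,D11}) to upgrade \eqref{1exa1} from the single exponent $p_0$ to the following: for every $1<q<\infty$ and every $v\in A_q$,
\[
  \norm{g}{L_{q,v}(\rn)}\le c\,\norm{f}{L_{q,v}(\rn)},
\]
with $c$ depending on $q$ and on $v$ only through $[v]_{A_q}$. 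We will only apply this to weights $v$ lying in $A_1$, so that the constant will be uniform.

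For \eqref{2exa2} with $p>1$, fix $w\in A_p$ and a cube $Q$, and let $q\in(1,q_0]$ and $s>1$ be as furnished by Proposition \ref{LEmb} (so that $w\in A_{p/q}$). Since $q<p$, the space $L_{p/q,w}(Q)$ is dual to $L_{(p/q)',w}(Q)$, whence
\[
  \norm{g}{L_{p,w}(Q)}^q=\norm{g^q}{L_{p/q,w}(Q)}=\sup\Big\{\int_Q g^q\,h\,w:\ h\ge0,\ \norm{h}{L_{(p/q)',w}(Q)}=1\Big\}.
\]
Fix such an $h$ and set $v=M(h^sw^s\chi_Q)^{1/s}$, which is an $A_1$ weight whose constant depends only on $s$. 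Because $M(h^sw^s\chi_Q)\ge h^sw^s$ almost everywhere on $Q$, we have $v\ge hw$ on $Q$, so that
\[
  \int_Q g^q\,h\,w\le\int_Q g^q\,v\le\norm{g}{L_{q,v}(\rn)}^q\le c\,\norm{f}{L_{q,v}(\rn)}^q,
\]
the last inequality being the extrapolated estimate applied to $v\in A_1\subset A_q$. Finally, the embedding \eqref{emb1new} bounds $\norm{f}{L_{q,v}(\rn)}$ by $c\,w(Q)^{1/p+r/n}\norm{f}{\Lmwe}$, uniformly in $h$. Taking the supremum over $h$ and then over $Q$ yields \eqref{2exa2}.

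The estimate \eqref{3exa3} follows the same scheme, replacing \eqref{emb1new} by the embedding \eqref{emb2new} for $\SLmwe$; this is where the hypotheses $w\in RH_\sigma$ and $-n/p\le r\le-n/(p\sigma)$ enter and where the normalization $|Q|^{1/p+r/n}$ appears. The endpoint $p=1$ (admissible only when $p_0=1$) is treated directly, without duality and without extrapolation: one uses Proposition \ref{LEmbE} with $\nu=1$ (legitimate since $w\in A_1\subset RH_\sigma$ for some $\sigma$) to embed $\Lmwe$ into $L_{1,M(w^s\chi_Q)^{1/s}}(\rn)$, takes $v=M(w^s\chi_Q)^{1/s}\ge w$ on $Q$, and applies \eqref{1exa1} to $v\in A_1$ directly. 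Concerning the final remark on one-sided finiteness, observe that if $\norm{g}{\Lmwe}<\infty$ then the very same embeddings, now applied to $g$, give $\norm{g}{L_{q,v}(\rn)}<\infty$, so the conditional form of the hypothesis may be used at each stage.

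The main difficulty, and the reason the argument cannot be run directly at the exponent $p$, is that a direct estimate would require controlling the local part $\int_{2Q}f^p v$ with no room left for a H\"older exponent; descending to an auxiliary $q<p$ creates exactly this room, but then the hypothesis must be shifted from $p_0$ to $q$ by extrapolation. The delicate point is therefore the uniformity of all constants: the exponent $q$ and the parameter $s$ must be chosen (via Proposition \ref{LEmb}) so that $w\in A_{p/q}$ while the weight $v$ remains an $A_1$ weight with constant independent of $h$ and $Q$. Only then do the extrapolation constant and the embedding constant stay uniform, so that the suprema over the test functions $h$ and over the cubes $Q$ can legitimately be taken. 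It is precisely because Proposition \ref{LEmb} supplies the dual exponent $(p/q)'$ for $h$ that the duality step closes up cleanly.
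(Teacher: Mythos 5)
Your proposal is correct and follows essentially the same route as the paper's own proof: extrapolate the hypothesis to an auxiliary exponent $q\in(1,p)$, dualize $\norm{g}{L_{p,w}(Q)}$ against $h\in L_{(p/q)',w}(Q)$, majorize $hw\chi_Q$ by the $A_1$ weight $M(h^sw^s\chi_Q)^{1/s}$ with constant depending only on $s$, and close the estimate with the embeddings of Propositions \ref{LEmb} and \ref{LEmbE} (the latter with $\nu=1$ for the endpoint $p=1$). The treatment of the conditional (left-hand side finite) version via the embeddings applied to $g$ also matches part (c) of the paper's argument.
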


\begin{Rem}
In \eqref{1exa1} we do not need to require that $f$ and $g$ are in $L_{p_0,w}(\rn)$ for all $w\in A_{p_0}$. In the first part of the statement we assume that \eqref{1exa1} holds whenever the right-hand side is finite (and in such case, this forces the left-hand side to be finite). Then the conclusion of the theorem is that for every $f$ in $\Lmwe$ or in $\SLmwe$, $g$ is in the same space and the norm inequality is satisfied. When $g=Tf$ for some operator $T$ this means that $Tf$ is in the same Morrey space as $f$ is, for all the functions in the space. 

In the last part of the statement we only assume that \eqref{1exa1} holds when the left-hand side is finite. That is, it could happen that for a particular weight and a particular pair of functions the left-hand side is infinity and the right-hand side is finite. Under this less restrictive hypothesis, the fact that $f$ is in $\Lmwe$ or in $\SLmwe$ does not imply the same thing for $g$, but if $g$ also is assumed to be in the corresponding Morrey space, then the norm inequality holds.

In either case the extrapolation theorem in the usual weighted Lebesgue spaces theorem says that if \eqref{1exa1} holds for a particular $p_0$, it holds for $1<p<\infty$ and $A_p$ weights with the same interpretation of the inequalities. The proof given in \cite{D11}, for instance, is only valid in the first situation, although this is not explicitely indicated in the paper. The proofs in \cite{CMP11} cover the second case (left-hand side finite), and are valid also for the first case. In the applications to operators, the advantage of using the first version is that one proves directly the result for the whole weighted Lebesgue space, without making the extension from a dense subclass.
\end{Rem}

The part of the theorem corresponding to \eqref{2exa2} in the case $p>1$ was proved in \cite{RS16} using a different method, which involves the predual Morrey spaces.

\begin{proof} (a) \textit{The case $p>1$}. From the extrapolation theorem we can assume \eqref{1exa1} for any $p\in (1,\infty)$ and the weight class $A_p$.	

Let $w\in A_{p}$. Choose $q\in (1,p)$ and  $s>1$ such that the embedding \eqref{emb1new} holds. Set $\tilde p=p/q$.

Let $Q$ be a fixed cube.  Since
\begin{equation*}
\left(\int_{Q} g^{p}w\right)^\frac{1}{p}  = \left(\int_{Q} g^{\tilde p q}w\right)^\frac{1}{\tilde p q}
	=\sup_{h\,:\,\Vert h| L_{\tilde p',w}(Q)\Vert=1}\left(\int_{Q} g^{q}h w\right)^\frac{1}{q},
	\end{equation*}
we fix such a function $h$ and we have
\begin{align} \label{p2}
\begin{split}
\left(\int_{\rn} g^{q}h w \chi_{Q}\right)^\frac{1}{q} & \le  \left(\int_{\rn} g^{q}M(h^s w^s \chi_{Q})^\frac{1}{s}\right)^\frac{1}{q}\\
 & \le c \left(\int_{\rn} f^{q}M(h^s w^s \chi_{Q})^\frac{1}{s}\right)^\frac{1}{q},  
  \end{split}
\end{align}
where the second inequality holds because $M(h^s w^s \chi_{Q})^{1/s}\in A_1\subset A_q$. We checked in the proof of Proposition \ref{LEmb} that the weight is well defined. Note that the constant in \eqref{p2} is independent of $Q$ and $h$ because the $A_1$-constant of $M(h^s w^s \chi_{Q})^{1/s}$, which is an upper bound of the $A_q$-constant,  depends only on $s$ (actually, it behaves as $(s-1)^{-1}$; see \cite[Theorem 9.2.7]{G09}).

We are now in the situation of the embedding \eqref{emb1new} and the proof of \eqref{2exa2} for $p>1$ is complete.

In the case of the spaces $\SLmwe$, we use the embedding \eqref{emb2new}.

(b) \textit{The case $p=1$}. We assume now that \eqref{1exa1} holds for $p_0=1$ and $w\in A_1$ and prove the theorem for $p=1$ and $w\in A_1$.

We want to apply the embeddings of Proposition \ref{LEmbE} with $\nu=1$. Choose  $s>1$ such that \eqref{emb1Enew} and \eqref{emb2Enew} hold. Fix a cube $Q\subset\rn$. The weight $M(w^s\chi_Q)^{1/s}$ is in $A_1$. Then we have 
\begin{\eq*} 
	 \int_{Q} gw \le \int_{Q} g M(w^s\chi_Q)^{\frac 1s} \le c_1 \int_{\rn} f M(w^s\chi_Q)^{\frac 1s}.
\end{\eq*}%
Thus, \eqref{1exa1} and \eqref{2exa2} follow from \eqref{emb1Enew} and \eqref{emb2Enew}.

(c) If the assumption \eqref{1exa1} holds only when the left-hand side is finite, the proof is still valid assuming that the left-hand side of \eqref{2exa2} or  \eqref{3exa3} is finite. Indeed, since $g$ is in $\Lmwe$ or in $\SLmwe$, the embeddings \eqref{emb1new} and \eqref{emb2new} say that the second term in \eqref{p2} is finite and the proof of part (a) goes on. The same thing is true for part (b) of the proof using the embeddings of Proposition \ref{LEmbE}.
\end{proof}

There are weighted inequalities between pairs of operators with $L_{p,w}$-quasinorms for $0<p<\infty$ and $w\in A_\infty$. Such inequalities hold when the left-hand side is finite. We can get similar type of estimates with Morrey quasinorms.

\begin{Corol} \label{TEAinf}
Let $0< p_0<\infty$ and let $\mathcal F$ be a collection of nonnegative measurable pairs of functions. Assume that for every $(f,g)\in \mathcal F$ and every $w\in A_{\infty}$ we have
\begin{\eq} \label{1exa1inf}
  \norm{g}{L_{p_0,w}(\rn)}\le c_1 \norm{f}{L_{p_0,w}(\rn)},
\end{\eq}
whenever the left-hand side is finite. 
Then for every
$0< p<\infty$, every $-n/p \le r<0$ and every $w\in A_{\infty}$ we have
\begin{\eq} \label{2exa2inf}
  \norm{g}{\Lmwe}\le c_2 \norm{f}{\Lmwe},
\end{\eq}
whenever the left-hand side is finite. 

Furthermore, if $w\in RH_\sigma$ and $-n/p \le r\le-n/(p\sigma)$  we have
\begin{\eq} \label{3exa3inf}
  \norm{g}{\SLmwe}\le c_3 \norm{f}{\SLmwe}, 
\end{\eq}
  whenever the left-hand side is finite. 
\end{Corol}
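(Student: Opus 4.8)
The plan is to deduce Corollary \ref{TEAinf} from the argument already used for Theorem \ref{TE1}, the two new ingredients being the $A_\infty$ extrapolation theorem and the elementary scaling identity behind \eqref{TFE}, which together let us move to an admissible Muckenhoupt range. First I would invoke the $A_\infty$ extrapolation theorem (cf.~\cite{CMP11,D11}): since \eqref{1exa1inf} is precisely its hypothesis, it upgrades the single-exponent assumption to the statement that, for every $(f,g)\in\mathcal F$, every $0<t<\infty$ and every $v\in A_\infty$, one has $\norm{g}{L_{t,v}(\rn)}\le c\,\norm{f}{L_{t,v}(\rn)}$ whenever its left-hand side is finite. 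Thus the weighted Lebesgue inequality becomes available at \emph{any} exponent and for \emph{any} $A_\infty$ weight.

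The obstacle is that the embeddings of Proposition \ref{LEmb}, which drive the proof of Theorem \ref{TE1}, require $w$ to lie in the class $A_{p/\gamma}$ attached to the Morrey exponent, whereas here $p$ may be arbitrarily small and $w$ is only assumed to be in $A_\infty$, so that $w\notin A_p$ in general. To repair this I would fix $0<p<\infty$, $-n/p\le r<0$ and $w\in A_\infty$, choose $p_1$ with $w\in A_{p_1}$, and pick $\gamma_0\in(0,1]$ so small that $P:=p/\gamma_0>1$ and $P\ge p_1$; then $w\in A_P$. The computation behind \eqref{TFE} is purely algebraic and hence valid for every positive power, giving $\norm{h}{\Lmwe}=\norm{|h|^{\gamma_0}}{{L}^{\gamma_0 r}_{P}(w,\rn)}^{1/\gamma_0}$ for every $h$. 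Consequently \eqref{2exa2inf} is equivalent to the Morrey inequality at the admissible exponent $P>1$, with weight $w\in A_P$, for the pair $(|f|^{\gamma_0},|g|^{\gamma_0})$.

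For this pair and this exponent I would run verbatim the computation of Theorem \ref{TE1}(a): express the local $L_{P,w}(Q)$-norm by duality, pass from $hw$ to the $A_1$ weight $M(h^sw^s\chi_Q)^{1/s}$ exactly as in \eqref{p2}, apply the Lebesgue inequality at an exponent $q<P$ to the pair $(|f|^{\gamma_0},|g|^{\gamma_0})$, and finally invoke the embedding \eqref{emb1new} of Proposition \ref{LEmb} with $w\in A_P$. The Lebesgue step is legitimate because $M(h^sw^s\chi_Q)^{1/s}\in A_1\subset A_\infty$ (with $A_1$-constant depending only on $s$, which keeps the bound uniform in $Q$ and $h$), and because the inequality for $(|f|^{\gamma_0},|g|^{\gamma_0})$ at exponent $q$ is just the inequality for $(f,g)$ at exponent $\gamma_0 q$, available for all $A_\infty$ weights by the first step. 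Raising the outcome to the power $1/\gamma_0$ yields \eqref{2exa2inf}. The finiteness hypothesis is handled as in part (c) of the proof of Theorem \ref{TE1}: since $g\in\Lmwe$, the embedding forces $\int_{\rn}|g|^{\gamma_0 q}M(h^sw^s\chi_Q)^{1/s}$ to be finite, so the Lebesgue inequality (valid when its left-hand side is finite) may indeed be applied.

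For \eqref{3exa3inf} I would argue identically, replacing \eqref{emb1new} by the embedding \eqref{emb2new} for $\SLmwe$. The substitution $h\mapsto|h|^{\gamma_0}$ scales $(p,r)$ to $(P,\gamma_0 r)$ but leaves $w$ untouched, so the hypotheses $w\in RH_\sigma$ and $-n/p\le r\le-n/(p\sigma)$ transform into exactly $w\in A_P\cap RH_\sigma$ and $-n/P\le\gamma_0 r\le-n/(P\sigma)$, which are the requirements of \eqref{emb2new}; the endpoint $r=-n/(p\sigma)$ is reached, as before, from the openness of the reverse Hölder classes. The main difficulty, and the only genuinely new point beyond Theorem \ref{TE1}, is precisely the need to escape the restriction $w\in A_{p/\gamma}$ when $p$ is small, which the scaling \eqref{TFE} resolves while, as just noted, respecting the $RH_\sigma$ condition and the admissible range of $r$ for the strong Morrey spaces.
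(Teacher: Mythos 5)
Your proposal is correct and follows essentially the same route as the paper: first upgrade \eqref{1exa1inf} by $A_\infty$ extrapolation to all exponents and all $A_\infty$ weights, then use the scaling identity \eqref{TFE} with a small power $\gamma_0$ (the paper writes $\gamma=p/q$ with $w\in A_q$) to move to an exponent $P>1$ for which $w\in A_P$, and finally apply Theorem \ref{TE1} (the paper cites it as a black box at $p_0=1$ with $A_1$ weights, while you rerun its proof; the two are interchangeable). Your verification that the $RH_\sigma$ condition and the range of $r$ are preserved under the scaling matches what the paper leaves implicit in "the proof of \eqref{3exa3inf} is similar."
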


\begin{proof}
By extrapolation in $L_{p,w}$ spaces we can assume that \eqref{1exa1inf} holds for all $p_0\in (0,\infty)$ (see \cite{CMP04}).

Let $0<p<\infty$. Given  $w\in A_{\infty}$, take $q\in [1,\infty)$ such that $w\in A_q$. If $p\ge q$, since $A_q\subset A_p$, the result follows directly from Theorem \ref{TE1}. If $p<q$, we have
\begin{equation*}
 \norm{g^{p/q}}{L_{1,u}(\rn)}\le c \norm{f^{p/q}}{L_{1,u}(\rn)},
\end{equation*}
for $u\in A_1$, whenever the left-hand side is finite, and we apply Theorem \ref{TE1} to obtain
\begin{equation*}
  \norm{g^{p/q}}{{ L}^{rp/q}_{q}(w,\rn)}\le c_2 \norm{f^{p/q}}{{ L}^{rp/q}_{q}(w,\rn)},
\end{equation*}
which is the desired result using the scaling \eqref{TFE} with $\gamma=p/q$. 

The proof of \eqref{3exa3inf} is similar.
\end{proof}

\begin{Corol} \label{TE2}
Let $1\le \gamma \le p_0<\infty$.
Assume that $T$ is an operator acting from $\bigcup_{w\in A_{p_0/\gamma}} L_{p_0,w}(\rn)$ into the space of measurable functions satisfying
\begin{\eq} \label{ex1}
  \norm{Tf}{L_{p_0,w}(\rn)}\le c_1 \norm{f}{L_{p_0,w}(\rn)} 
\end{\eq}
 for all $f\in L_{p_0,w}(\rn)$ and $w\in A_{p_0/\gamma}$, with a constant depending on $[w]_{A_{p_0}/\gamma}$.
Then for every
$\gamma< p<\infty$ (and also $p=\gamma$ if $p_0=\gamma$), every ${-n/p \le r<0}$ and every $w\in A_{p/\gamma}$,  we have
that $T$ is well-defined on $\Lmwe$ by restriction and, moreover,
\begin{\eq} \label{ex2}
  \norm{Tf}{\Lmwe}\le c_2 \norm{f}{\Lmwe}, 
\end{\eq}
for all $f\in \Lmwe$.

Furthermore, for $p$ as before and every $w\in A_{p/\gamma} \cap RH_{\sigma}$, if $-n/p  \le r\le-n/(p\sigma)$ we have
that $T$ is well-defined on $\SLmwe$ by restriction and, moreover,
\begin{\eq} \label{3exa3A}
  \norm{Tf}{\SLmwe}\le c_3 \norm{f}{\SLmwe} 
\end{\eq}
for all $f\in \SLmwe$.
The constants $c_2$ and $c_3$ in \eqref{ex2} and \eqref{3exa3A}
do not depend on $f$ but may depend on the weight $w$ and the involved parameters.

If $T$ satisfies instead of \eqref{ex1}  the weak-type assumption
\begin{equation*}
  \norm{Tf}{WL_{p_0,w}(\rn)}\le c_1 \norm{f}{L_{p_0,w}(\rn)} 
\end{equation*}
the estimates \eqref{ex2} and \eqref{3exa3A} are replaced by their weak-type counterparts, that is, $\norm{Tf}{W\Lmwe}$ and $\norm{Tf}{W\SLmwe}$ appear at the left-hand side of the inequalities. 
\end{Corol}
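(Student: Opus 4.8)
The plan is to reduce Corollary \ref{TE2} to the pair-of-functions formulation already established in Theorem \ref{TE1}, handling the definition-by-restriction issue through the embeddings of Section \ref{sec3}. First I would set up the family $\mathcal{F}$ by taking the pairs $(f, Tf)$ as $f$ ranges over the appropriate domain; the hypothesis \eqref{ex1} is precisely the assumption \eqref{1exa1} of Theorem \ref{TE1} (after the reduction $\gamma=1$, i.e. replacing $p_0$ by $p_0/\gamma$ and $f$ by $|f|^\gamma$ via the scaling \eqref{TFE}). The delicate point, and the reason this is a corollary rather than an immediate restatement, is that Theorem \ref{TE1} is a statement about abstract pairs, whereas here we must first \emph{make sense of} $Tf$ for $f$ in the Morrey space, since $T$ is only given on $\bigcup_{w\in A_{p_0/\gamma}} L_{p_0,w}(\rn)$.

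Accordingly, my first substantive step would be to verify that every $f\in\Lmwe$ (respectively $f\in\SLmwe$, under the $RH_\sigma$ hypothesis and the range $-n/p\le r\le -n/(p\sigma)$) actually lies in the domain of $T$. This is exactly the content of Corollary \ref{newcor}: under the stated conditions on $p$, $r$ and $w$, the inclusions \eqref{arb} and \eqref{arb2} give $\Lmwe, \SLmwe \subset \bigcup_{u\in A_{p_0/\gamma}} L_{p_0,u}(\rn)$. Hence $Tf$ is defined (by restriction of $T$ to the Morrey space sitting inside this union), and the phrase ``well-defined on $\Lmwe$ by restriction'' is justified. I would spell out the two cases $\gamma<p_0$ and $\gamma=p_0$ exactly as in the proof of Corollary \ref{newcor}, noting that the range of $r$ demanded in the present statement matches the range for which those inclusions hold.

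Once $Tf$ is known to exist, the norm inequalities \eqref{ex2} and \eqref{3exa3A} follow by applying Theorem \ref{TE1} to the family $\mathcal{F}=\{(f,Tf)\}$. Here I would invoke the part of Theorem \ref{TE1} corresponding to \eqref{2exa2} (for $\Lmwe$) and to \eqref{3exa3} (for $\SLmwe$), with the understanding that the base estimate \eqref{ex1} plays the role of \eqref{1exa1}. The parameter bookkeeping is the routine part: the admissible range $\gamma<p<\infty$ (with $p=\gamma$ allowed when $p_0=\gamma$) and the constraint $-n/p\le r\le -n/(p\sigma)$ for the space $\SLmwe$ are inherited directly from Theorem \ref{TE1} after the $|f|^\gamma$ rescaling. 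The constants $c_2,c_3$ depend on $w$ and the parameters but not on $f$, as they come from the corresponding constants in Theorem \ref{TE1}.

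The main obstacle I anticipate is not the inequality itself but the \emph{interplay} between domain and range of parameters: one must check that the value of $r$ for which the embedding of Corollary \ref{newcor} places $f$ in the domain of $T$ is compatible with the value of $r$ for which Theorem \ref{TE1} yields the boundedness, and that both are consistent with the $RH_\sigma$ condition in the strong-space case. For the weak-type variant, the same scheme applies verbatim: the hypothesis becomes the weak-type inequality $\norm{Tf}{WL_{p_0,w}(\rn)}\le c_1\norm{f}{L_{p_0,w}(\rn)}$, the family $\mathcal{F}$ and the domain verification via Corollary \ref{newcor} are unchanged, and one uses the weak-type form of the conclusion of Theorem \ref{TE1}, so that $\norm{Tf}{W\Lmwe}$ and $\norm{Tf}{W\SLmwe}$ appear on the left; the argument carries the weak quasinorms through the embeddings in the same way, since the embeddings of Section \ref{sec3} are insensitive to whether one uses strong or weak $L_q$ on the target.
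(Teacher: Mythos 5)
Your treatment of the strong-type estimates is correct and is essentially the paper's own proof: well-definedness of $Tf$ on $\Lmwe$ and $\SLmwe$ comes from the inclusions of Corollary \ref{newcor}, and the norm inequalities follow by feeding the pairs $(|f|^\gamma,|Tf|^\gamma)$ into Theorem \ref{TE1} and undoing the scaling via \eqref{TFE}. The parameter bookkeeping you worry about is indeed consistent, since Corollary \ref{newcor} covers exactly the ranges of $p$, $r$, $\sigma$ appearing in the statement.

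The weak-type paragraph, however, has a gap. Theorem \ref{TE1} has no ``weak-type form of the conclusion'': it is stated and proved with strong $L_{p_0,w}$ quasinorms on both sides, and its proof hinges on the duality identity $\bigl(\int_Q g^{p}w\bigr)^{1/p}=\sup_h\bigl(\int_Q g^{q}hw\bigr)^{1/q}$, which is a strong-norm computation; one cannot simply ``carry the weak quasinorms through,'' and the observation that the embeddings of Section \ref{sec3} are insensitive to strong versus weak $L_q$ is beside the point, because the embeddings are applied to the input $f$, not to $Tf$. The device you are missing --- and the one the paper uses --- is to convert the weak-type hypothesis into a family of strong-type inequalities: since $\norm{Tf}{WL_{p_0,w}(\rn)}=\sup_{t>0}\norm{t\chi_{\{|Tf|>t\}}}{L_{p_0,w}(\rn)}$, the hypothesis says that the pairs $(|f|^\gamma,\,t^\gamma\chi_{\{|Tf|>t\}})$ satisfy \eqref{1exa1} in $L_{p_0/\gamma,w}$ with a constant uniform in $t>0$. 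Applying the (strong-type) Theorem \ref{TE1} to these pairs and then taking the supremum over $t$ yields exactly the quasinorms $\norm{Tf}{W\Lmwe}$ and $\norm{Tf}{W\SLmwe}$ of Definition \ref{d1:def}. With this substitution your argument is complete.
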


\begin{proof}
The operator is well defined by restriction as a consequence of the embeddings of Corollary \ref{newcor}. To obtain the estimates \eqref{ex2} and \eqref{3exa3A} it is enough to apply Theorem \ref{TE1} to the pairs $(|f|^\gamma, |Tf|^\gamma)$ with \break $f\in\bigcup_{w\in A_{p_0/\gamma}} L_{p_0,w}(\rn)$ and to use the scaling property of the norms in  
$\Lmwe$ and $\SLmwe$ (see \eqref{TFE}).

To deal with the weak-type inequalities we apply Theorem \ref{TE1} to the pairs $(|f|^\gamma, t^\gamma\chi_{\{|Tf|>t\}})$.
\end{proof}

\begin{Rem} 
	That the operator is  well-defined, for instance, on the set $\bigcup_{w\in A_{p}} \Lpw$ for some $1<p<\infty$ is for practical reasons not an additional condition. Usually one applies the theorem to continuous operators which are defined on an common dense subset of $\bigcap_{w\in A_{p}} \Lpw$, for example singular integrals defined on $\Dcal(\rn)$. The singular integrals can be extended to each $\Lpw$ by continuity. Such extensions coincide for  functions in the intersection of two weighted Lebesgue spaces, because for each such function one can find a  sequence in $\Dcal(\rn)$ converging to it in both spaces. This implies the well-definedness of the singular integrals on $\bigcup_{1<p<\infty, w\in A_{p}} \Lpw$. 
		\par
Concerning the spaces $\SLmwe$ it arises the question about the necessity of the additional weight condition, that is, whether for fixed $r$ being in some reverse H\"older class is necessary. It is known that our result could not hold for all $A_p$ weights (cf.~\cite{Sam09}). 
\end{Rem}
\begin{Rem}
Chiarenza and Frasca in \cite{ChFr87} noticed that weighted $A_1$ inequalities for singular integrals yields boundedness on unweighted Morrey spaces. With a different approach Adams and Xiao \cite{AX12} revisited this method using weighted inequalities to obtain boundedness in the unweighted Morrey spaces. 
\end{Rem}

\begin{Corol}\label{vvcorol}
Assume that we have a sequence of operators $\{T_j\}$ satisfying the assumptions of 
Corollary \ref{TE2}, and that the constant $c_1$ in \eqref{ex1} is independent of $j$. Then for $\gamma<q, p<\infty$ and the same conditions on $w$ and $r$, we have the vector-valued estimates
\begin{equation*} 
  \bigg\Vert{\big(\sum_j |T_jf_j|^q\big)^{1/q}}\bigg|{\Lmwe}\bigg\Vert\le c_4 \bigg\Vert{\big(\sum_j |f_j|^q\big)^{1/q}}\bigg|{\Lmwe}\bigg\Vert, 
\end{equation*} 
and
\begin{equation*} 
   \bigg\Vert{\big(\sum_j |T_jf_j|^q\big)^{1/q}}\bigg|{\SLmwe}\bigg\Vert
   \le c_5 \bigg\Vert{\big(\sum_j |f_j|^q\big)^{1/q}}\bigg|{\SLmwe}\bigg\Vert. 
\end{equation*}   
 \end{Corol}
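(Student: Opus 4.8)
The plan is to deduce both vector-valued inequalities from a single application of Theorem~\ref{TE1}, exploiting that the extrapolation machinery produces $\ell^q$-valued estimates for free once the corresponding scalar estimate is known on the diagonal. Writing $F=\big(\sum_j|T_jf_j|^q\big)^{1/q}$ and $G=\big(\sum_j|f_j|^q\big)^{1/q}$, the goal is to verify that the family of pairs $\mathcal F=\{(G^\gamma,F^\gamma)\}$ satisfies the hypothesis~\eqref{1exa1} of Theorem~\ref{TE1} with base exponent $p_0=q/\gamma$ and weight class $A_{q/\gamma}$; the desired Morrey bounds then come out of the conclusion of that theorem after undoing the $\gamma$-scaling via~\eqref{TFE}.

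First I would produce the diagonal scalar estimate. Fix $\gamma<q<\infty$ and apply Corollary~\ref{TE2} with $p=q$ and $r=-n/q$, recalling that ${L}^{-n/q}_{q}(w,\rn)=L_{q,w}(\rn)$. Since the constant $c_1$ in \eqref{ex1} is independent of $j$ and the extrapolation constant at a fixed weight depends only on $c_1$, on $[w]_{A_{q/\gamma}}$ and on the parameters, this gives
\[
  \norm{T_jf}{L_{q,w}(\rn)}\le c\,\norm{f}{L_{q,w}(\rn)},\qquad w\in A_{q/\gamma},
\]
with $c$ uniform in $j$. Raising to the power $q$ and summing over $j$ (first over finitely many indices and then passing to the limit by monotone convergence) yields the diagonal $\ell^q$-valued inequality $\norm{F}{L_{q,w}(\rn)}\le c\,\norm{G}{L_{q,w}(\rn)}$ for every $w\in A_{q/\gamma}$. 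Raising this to the power $\gamma$ and using $\norm{F}{L_{q,w}(\rn)}^\gamma=\norm{F^\gamma}{L_{q/\gamma,w}(\rn)}$ (and likewise for $G$) shows that $\mathcal F$ obeys \eqref{1exa1} with $p_0=q/\gamma$.

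Now I would invoke Theorem~\ref{TE1} for the family $\mathcal F$. Taking the output exponent $\tilde p=p/\gamma$ (so $w\in A_{p/\gamma}$, which forces $\gamma<p<\infty$) and the Morrey index $\tilde r=r\gamma$, the conditions $-n/\tilde p\le\tilde r<0$ and $-n/\tilde p\le\tilde r\le-n/(\tilde p\sigma)$ translate exactly into $-n/p\le r<0$ and $-n/p\le r\le-n/(p\sigma)$. The two conclusions of Theorem~\ref{TE1} read $\norm{F^\gamma}{L^{r\gamma}_{p/\gamma}(w,\rn)}\le c\,\norm{G^\gamma}{L^{r\gamma}_{p/\gamma}(w,\rn)}$ and its $\SLmwe$-analogue for $w\in A_{p/\gamma}\cap RH_\sigma$; applying the scaling identity~\eqref{TFE} once more, namely $\norm{F^\gamma}{L^{r\gamma}_{p/\gamma}(w,\rn)}=\norm{F}{\Lmwe}^\gamma$ and similarly for $G$, converts these into the two asserted vector-valued Morrey inequalities.

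The main point to get right is the diagonal reduction: recognizing that the $\ell^q$ structure only needs to be handled at the single exponent $q$, where it collapses to a plain summation once the scalar $L_{q,w}$ bound is available uniformly in $j$, after which all transfer to the full range of $p$ and $r$ is delegated to the already-proved Theorem~\ref{TE1}. The only genuinely technical caveat is the uniformity in $j$ of the constant (which is why the hypothesis that $c_1$ in \eqref{ex1} is $j$-independent is essential), together with the routine finiteness and limiting argument needed to justify the interchange of summation and the use of Theorem~\ref{TE1} when the right-hand side is finite; these are standard and require no new ideas.
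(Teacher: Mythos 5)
Your proof is correct and follows essentially the same route as the paper, which simply applies Theorem \ref{TE1} to the pairs $\big(\big(\sum_j |f_j|^q\big)^{1/q}, \big(\sum_j |T_jf_j|^q\big)^{1/q}\big)$; your diagonal reduction at $p=q$ (where the $\ell^q$ sum collapses by Tonelli, using the $j$-uniform scalar bound) is exactly the standard verification of the hypothesis \eqref{1exa1} that the paper leaves implicit. No gaps.
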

To prove these vector-valued inequalities in Morrey spaces it is enough to apply Theorem \ref{TE1} to pairs $\left(\big(\sum_j |f_j|^q\big)^{1/q}, \big(\sum_j |T_jf_j|^q\big)^{1/q}\right)$.

Some operators satisfy weighted inequalities for classes of weights of the type $A_q\cap RH_\sigma$. This is the case of adjoints of linear operators bounded on $\Lpw$ for $p>\gamma$ and $w\in A_{p/\gamma}$. By duality the adjoint operator is bounded on $\Lpw$ for $1<p\le \gamma$ and $\{w^{1-p}: w\in A_{p'/\gamma}\}$, which is the same as  $A_{p}\cap RH_{{\gamma'}/{(\gamma'-p)}}$. Also operators bounded in a limited range of values of $p$ satisfy weighted inequalities for weights in classes of such type (see \cite{AM07} and \cite{CDL12}). 
Similarly, we have sometimes the weak or strong boundedness with weights in a class of the type $\{w: w^{\nu}\in A_1\}$. 

\begin{Thm} \label{TE1P}
Let $1\le p_-\le p_0\le p_+<\infty$ and let $\mathcal F$ be a collection of nonnegative measurable pairs of functions. Let $\mathcal F$ be a collection of nonnegative pairs of function in $L_{p_0,w}(\rn)$. Assume that for every $(f,g)\in \mathcal F$ and every $w\in A_{p_0/p_-}\cap RH_{(p_+/p_0)'}$ we have
\begin{\eq} \label{1exa1PP}
  \norm{g}{L_{p_0,w}(\rn)}\le c_1 \norm{f}{L_{p_0,w}(\rn)},
\end{\eq}
where $c_1$ depends on the $A_{p_0}$-constant of the weight $w$. 
Let  $p_-< p<p_+$ (and $p=p_-$ if $p_0=p_-$) and $w\in A_{p/p_-}\cap RH_\sigma$ with $\sigma\ge (p_+/p)'$. Then for every $-n/p \le r\le -n\sigma'/p_+$ we have
\begin{\eq} \label{2exa2P}
  \norm{g}{\Lmwe}\le c_2 \norm{f}{\Lmwe}. 
\end{\eq}

Furthermore,  if $-n/p \le r\le-n/(p\sigma)-n/p_+$  we have
\begin{\eq} \label{3exa3P}
  \norm{g}{\SLmwe}\le c_3 \norm{f}{\SLmwe}.
\end{\eq}

The constants $c_2$ and $c_3$ in  \eqref{2exa2P} and \eqref{3exa3P} 
do not depend on $f$ but may depend on the weights and the involved parameters.
\end{Thm}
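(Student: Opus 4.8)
The plan is to follow the architecture of the proof of Theorem \ref{TE1}, replacing the embedding of Proposition \ref{LEmb} by that of Proposition \ref{LEmbP2} (taken with $\gamma=p_-$ and $b=p_+$), and replacing the use of the classical extrapolation theorem by its limited-range counterpart. First I would invoke the limited-range extrapolation theorem in the weighted Lebesgue scale (see \cite{AM07,CDL12}) to upgrade the single hypothesis \eqref{1exa1PP} into the whole family of inequalities $\norm{g}{L_{q,v}(\rn)}\le c\norm{f}{L_{q,v}(\rn)}$, valid for every $q$ with $p_-<q<p_+$ and every $v\in A_{q/p_-}\cap RH_{(p_+/q)'}$ (and also at $q=p_-$ when $p_0=p_-$), the constant depending on $v$ only through the relevant weight constants.

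For the main case $p_-<p<p_+$ and the space $\Lmwe$, fix $w\in A_{p/p_-}\cap RH_\sigma$ with $\sigma\ge(p_+/p)'$ and a cube $Q$. Proposition \ref{LEmbP2} furnishes an exponent $q\in(p_-,q_0]$ (chosen strictly above $p_-$ so that the extrapolated Lebesgue inequality is available at $q$) together with an $s$ satisfying $\sigma(q)=(p_+/q)'<s\le s_0(q)$, for which the embedding \eqref{emb11new} holds with constant comparable to $w(Q)^{1/p+r/n}$. As in \eqref{p2}, for $h\in L_{(p/q)',w}(Q)$ of norm one I would write $\left(\int_Q g^p w\right)^{1/p}=\sup_h\left(\int_Q g^q hw\right)^{1/q}$, dominate $hw\chi_Q\le M(h^sw^s\chi_Q)^{1/s}$ pointwise, set $v=M(h^sw^s\chi_Q)^{1/s}$, apply the extrapolated Lebesgue inequality at exponent $q$ with weight $v$, and finish by invoking the embedding \eqref{emb11new} for $f$. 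Taking the supremum over $h$ and then over $Q$ yields \eqref{2exa2P}. The treatment of $\SLmwe$ is identical, using the embedding \eqref{emb22new}, whose admissible range of $r$ is exactly the one appearing in \eqref{3exa3P}.

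The one genuinely new point---and the step I expect to be the main obstacle---is checking that the Coifman--Rochberg weight $v=M(h^sw^s\chi_Q)^{1/s}$ lies in the \emph{restricted} class $A_{q/p_-}\cap RH_{(p_+/q)'}$ with constants independent of $h$ and $Q$, since the limited-range extrapolated inequality is only available for such weights. Membership in $A_1\subset A_{q/p_-}$ is the same fact already used in the proof of Theorem \ref{TE1}, its constant depending only on $s$. For the reverse H\"older part I would use the Johnson--Neugebauer characterization $A_1\cap RH_\rho=\{u:u^\rho\in A_1\}$ (the case $p=1$ of \eqref{apandrh}) with $\rho=(p_+/q)'$: since Proposition \ref{LEmbP2} guarantees $s>(p_+/q)'$, the exponent $\rho/s=(p_+/q)'/s$ is strictly less than $1$, so $v^{\rho}=M(h^sw^s\chi_Q)^{\rho/s}$ is again a fractional power (below $1$) of a maximal function, hence an $A_1$ weight with constant depending only on $\rho/s$. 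Thus $v\in A_1\cap RH_{(p_+/q)'}\subset A_{q/p_-}\cap RH_{(p_+/q)'}$ with uniform constants, which is precisely what makes the extrapolated hypothesis applicable with a constant independent of $h$ and $Q$.

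Finally, for the endpoint $p=p_-$ (admissible only when $p_0=p_-$) the duality exponent $q<p$ is no longer available, so I would argue as in part (b) of the proof of Theorem \ref{TE1}: fix $Q$, bound $\int_Q g^{p_-}w\le\int_{\rn}g^{p_-}M(w^s\chi_Q)^{1/s}$, apply the hypothesis \eqref{1exa1PP} directly at $p_0=p_-$ with the weight $v=M(w^s\chi_Q)^{1/s}$ (which lies in $A_1\cap RH_{(p_+/p_-)'}$ by the same Coifman--Rochberg/Johnson--Neugebauer argument, choosing $s>(p_+/p_-)'$), and conclude via the embedding of Proposition \ref{LEmbE} taken with $\nu=(p_+/p_-)'$, whose admissible ranges of $r$ in \eqref{emb1Enew} and \eqref{emb2Enew} coincide exactly with those of \eqref{2exa2P} and \eqref{3exa3P}. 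The passage from the boundary value $\sigma=(p_+/p)'$ to the strict inequality required by Proposition \ref{LEmbP2}, as well as the attainment of the endpoint values of $r$, is handled throughout by the openness (Gehring's lemma) of the reverse H\"older classes.
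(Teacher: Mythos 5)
Your proposal is correct and follows essentially the same route as the paper: limited-range extrapolation in the Lebesgue scale, the duality step of \eqref{p2} with the weight $M(h^sw^s\chi_Q)^{1/s}$ shown to lie in $A_1\cap RH_{(p_+/q)'}$ via the choice $s>(p_+/q)'$, the embeddings of Proposition \ref{LEmbP2}, and Proposition \ref{LEmbE} with $\nu=(p_+/p_-)'$ at the endpoint $p=p_-$. The only (immaterial) difference is that the paper first normalizes to $p_-=1$ by passing to the pair $(f^{p_-},g^{p_-})$, whereas you carry the general $p_-$ throughout; you also spell out the Johnson--Neugebauer justification of the reverse H\"older membership, which the paper leaves implicit.
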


\begin{proof}
It is enough to prove the result for $p_-=1$, which we assume in what follows. To treat the case $p_->1$ write \eqref{1exa1PP} for the pair $(f^{p_-},g^{p_-})$ on  $L_{p_0/p_-,w}(\rn)$, and apply the case $p_-=1$.

(a) \textit{The case $p>p_-$} (that is, $p>1$). First we observe that by limited range extrapolation (cf.~\cite[Thm.~4.9]{AM07} or \cite[Thm.~3.31]{CMP11}) \eqref{1exa1PP} holds in $L_{p,v}(\rn)$ with $v\in A_{p}\cap RH_{(p_+/p)'}$ for $1<p<p_+$.   

Let $w\in A_{p}\cap RH_{\sigma}$. We will apply Proposition \ref{LEmbP2} with $b=p_+$ and $\gamma=1$. We choose $q\in (1,p)$ and  $s>(p_+/q)'$ such that the embedding \eqref{emb11new} holds.  Let $Q$ be a fixed cube. As in the proof of Theorem \ref{TE1}, we use duality, and for $h$ in $L_{\tilde p',w}(Q)$ of norm $1$ we get \eqref{p2}, that is,
\begin{equation*} 
	\left(\int_{\rn} g^{q}h w \chi_{Q}\right)^\frac{1}{q}
  \le c \left(\int_{\rn} f^{q}M(h^s w^s \chi_{Q})^\frac{1}{s}\right)^\frac{1}{q}, 
\end{equation*}
with constant independent of $h$ and $Q$, because $M(h^s w^s \chi_{Q})^{1/s}\in A_{1}\cap RH_{(p_+/q)'}$, due to the choice of $s>(p_+/q)'$. Under the conditions required to $r$ we can apply Proposition \ref{LEmbP2} to obtain \eqref{2exa2P} and  \eqref{3exa3P}.

(b) \textit{The case $p=p_-$} (that is, $p=1$).
Now we are assuming that the inequality holds for $p=1$ and $w\in A_{1}\cap RH_{p_+'}$ (that is, $w^{p_+'}\in A_1$). The proof is as in part (b) of the proof of Theorem \ref{TE1} except for the fact that we need to choose $s>p_+'$ to guarantee that  $M(w^s \chi_{Q})^{p_+'/s}$ is in $A_1$. 

Under the assumptions on $r$, we are in the conditions of Proposition \ref{LEmbE} with $\nu=p_+'$. Then \eqref{2exa2P} and  \eqref{3exa3P} are deduced from \eqref{emb1Enew} and \eqref{emb2Enew} with $p=1$.
\end{proof}
Using this theorem and the embeddings given in Propositions \ref{LEmbE} and \ref{LEmbP2} we can obtain results analogous to those of Corollary \ref{TE2}, for the corresponding ranges of $p$ and classes of weights.

\begin{Corol} \label{TE2P}
Let $1\le p_-\le p_0\le p_+<\infty$. 
Assume that $T$ is an operator acting from $\bigcup_{w\in A_{p_0/p_-}\cap RH_{(p_+/p_0)'}} L_{p_0,w}(\rn)$ into the space of measurable functions that satisfies
\begin{equation*} 
  \norm{Tf}{L_{p_0,w}(\rn)}\le c_1 \norm{f}{L_{p_0,w}(\rn)} 
\end{equation*}
 for all $f\in L_{p_0,w}(\rn)$ and $w\in A_{p_0/p_-}\cap RH_{(p_+/p_0)'}$, with a bound depending on the constant of the weight. 
Then for every $p_-< p<p_+$ (and also $p=p_-$ if $p_0=p_-$) and $w\in A_{p/p_-}\cap RH_\sigma$ with $\sigma\ge (p_+/p)'$, and for every $-n/p \le r\le -n\sigma'/p_+$ we have
\begin{equation*}
  \norm{Tf}{\Lmwe}\le c_2 \norm{f}{\Lmwe} 
\end{equation*}
for all $f\in \Lmwe$.

Furthermore,  if $-n/p \le r\le-n/(p\sigma)-n/p_+$  we have
\begin{equation*} 
  \norm{Tf}{\SLmwe}\le c_3 \norm{f}{\SLmwe} 
\end{equation*}
for all $f\in \SLmwe$.
\end{Corol}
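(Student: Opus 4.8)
The plan is to follow the pattern of Corollary~\ref{TE2}, splitting the argument into two independent parts: first, that $T$ is well-defined on the Morrey spaces by restriction, i.e.\ that $\Lmwe$ and $\SLmwe$ are contained in the domain $\bigcup_{u\in A_{p_0/p_-}\cap RH_{(p_+/p_0)'}} L_{p_0,u}(\rn)$ of $T$; and second, the norm inequalities. The crucial simplification here, compared with Corollary~\ref{TE2}, is that the weight class $A_{p_0/p_-}\cap RH_{(p_+/p_0)'}$ in the hypothesis on $T$ is \emph{exactly} the one in hypothesis \eqref{1exa1PP} of Theorem~\ref{TE1P}. Hence no $p_-$-power scaling of the functions is needed and the second part reduces to a direct application of that theorem.

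For the well-definedness I would argue as in Corollary~\ref{newcor}, but through the limited-range embeddings. Given $f$ in $\Lmwe$ or $\SLmwe$ with $w\in A_{p/p_-}\cap RH_\sigma$, $\sigma\ge (p_+/p)'$, and $r$ in the stated range, the consequence \eqref{emb1P2} of Proposition~\ref{LEmbP2} (applied with $\gamma=p_-$ and $b=p_+$, the reverse-H\"older endpoint being absorbed by Gehring's openness) yields, for both spaces simultaneously, an embedding into some $L_{q,(1+|x|)^{-\alpha}}(\rn)$ with $p_-< q<p$ and $0<\alpha<n/\sigma(q)$. The point is then to check, via Remark~\ref{Ma3}, that the power weight $(1+|x|)^{-\alpha}$ lies in $A_{q/p_-}\cap RH_{\sigma(q)}$: indeed $(1+|x|)^{-\alpha}\in A_{q/p_-}$ because $0\le\alpha<n$, while membership in $RH_{\sigma(q)}$ is precisely the condition $\alpha<n/\sigma(q)$ produced by the embedding. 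Consequently $f$ lies in $\bigcup_{v\in A_{q/p_-}\cap RH_{\sigma(q)}}L_{q,v}(\rn)$, and the union-invariance of Proposition~\ref{LEmbP2B} (valid for $p_-<q,p_0<p_+$) transports it to $\bigcup_{u\in A_{p_0/p_-}\cap RH_{(p_+/p_0)'}}L_{p_0,u}(\rn)$, the domain of $T$. The case $p_0=p_-$ is simpler, since one embeds directly into $L_{p_-,(1+|x|)^{-\alpha}}(\rn)$ with $(1+|x|)^{-\alpha}\in A_1\cap RH_{(p_+/p_-)'}$ and the union step is unnecessary.

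With the domain question settled, $Tf$ is defined for every $f$ in $\Lmwe$ or $\SLmwe$, and I would apply Theorem~\ref{TE1P} to the family $\mathcal F=\{(|f|,|Tf|):f\in\bigcup_{w\in A_{p_0/p_-}\cap RH_{(p_+/p_0)'}}L_{p_0,w}(\rn)\}$. These pairs are nonnegative, and whenever $f\in L_{p_0,w}(\rn)$ the boundedness hypothesis gives $Tf\in L_{p_0,w}(\rn)$ together with $\norm{|Tf|}{L_{p_0,w}(\rn)}\le c_1\norm{|f|}{L_{p_0,w}(\rn)}$, which is exactly \eqref{1exa1PP}. Thus Theorem~\ref{TE1P} applies, and its conclusions \eqref{2exa2P} and \eqref{3exa3P}, read for the pair $(|f|,|Tf|)$ and using that the Morrey (quasi)norms depend only on the modulus, give the announced estimates for $T$ in the respective ranges of $r$ and $\sigma$.

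The only genuinely delicate point is the well-definedness step: one must match the reverse-H\"older exponent $\sigma(q)=(p_+/q)'$ demanded by the union-invariance Proposition~\ref{LEmbP2B} with the exponent for which the auxiliary weight $(1+|x|)^{-\alpha}$ actually belongs to $RH_{\sigma(q)}$, and simultaneously keep $r$ inside the admissible interval; the endpoints in $r$ and in $\sigma$ are reached, as throughout the paper, by invoking the openness (Gehring's lemma) of the reverse-H\"older classes. Once this bookkeeping is in place the two halves of the proof are routine.
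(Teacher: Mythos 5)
Your proposal is correct and matches the paper's intended argument: the paper disposes of this corollary with the single remark that it follows from Theorem \ref{TE1P} together with the embeddings of Propositions \ref{LEmbE} and \ref{LEmbP2} (combined, as you do, with the union invariance of Proposition \ref{LEmbP2B} and Remark \ref{Ma3} to place $(1+|x|)^{-\alpha}$ in $A_{q/p_-}\cap RH_{\sigma(q)}$), and your observation that no $p_-$-power rescaling is needed because the weight class in the hypothesis already coincides with that of \eqref{1exa1PP} is also right. The one point to tidy up is the endpoint $p=p_-$ (allowed when $p_0=p_-$): there Proposition \ref{LEmbP2} does not apply since it requires $p>\gamma$, and the well-definedness embedding must instead come from Proposition \ref{LEmbE} with $\nu=(p_+/p_-)'$, which is precisely why the paper cites that proposition alongside Proposition \ref{LEmbP2}.
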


Weak-type inequalities like in Corollary \ref{TE2} and vector-valued inequalities like in Corollary \ref{vvcorol}   can be written also in this setting.

\begin{Rem}
In \cite{CMP11} the authors get very general results with respect to extrapolation in Banach function spaces. But in this abstract setting they do not get results either at the endpoint ($A_1$ weights) or with respect to limited range extrapolation (cf.~Theorems \ref{TE1P}). They use a kind of duality (associated spaces) which is available for Morrey spaces of type $\Lmwe$ (cf.~\cite{RS16}), but we completely avoid it. However, their results deliver weighted inequalities which means that for applications to operators one still has to deal with the well-definedness of the operators (which we have done via embeddings).
\end{Rem}
\section{Applications to mapping properties of operators}\label{sec5}

There is a plentiful list of operators fulfilling the requirements of one or more of the theorems of the previous section. As we mentioned in the introduction several of those operators have been proved to be bounded in Morrey spaces (unweighted or weighted) using in general the particular form of the operator or some particular pointwise bound. In most cases only the size estimate with the Morrey norm has been proved, without any discussion about the possible definition of the operator in the corresponding Morrey space. All of this is avoided with our approach. In what follows, we present a collection of applications of our general results. Vector-valued Morrey inequalities also hold in all cases, but we do not mention them. In the last subsection we give Morrey estimates with $A_\infty$ weights and $0<p<\infty$.

\subsection{Calder\'{o}n-Zygmund operators and their maximal truncations}

It is by now a classical result that Calder\'on-Zygmund operators are bounded on $L_{p,w}(\rn)$ for $w\in A_p$ ($1<p<\infty$), and that they are of weak-type $(1,1)$ with respect to $A_1$ weights. Here we can understand the term Calder\'on-Zygmund operator in the general sense, that is, when the operator is represented by a two-variable kernel $K(x,y)$ with appropriate size and regularity estimates. The maximal operator associated to the  Calder\'on-Zygmund operators taking the supremum of the truncated integrals also satisfies similar weighted estimates. For a proof the reader can consult \cite[Chapter 9]{G09}. Then we can apply Corollary \ref{TE2}.

The regularity assumptions on $K(x,y)$ can be weakened and stated in some integral form called $L^r$-H\"ormander condition. In the convolution case it appears implicitely in \cite{KuW79}, and in a more general setting in \cite{RRT86}, for instance. In that case the weighted inequalities hold for $L_{p,w}(\rn)$ for $p\ge\gamma$ and $w\in A_{p/\gamma}$, where $\gamma=r'$. Then Corollary \ref{TE2} applies. If the estimates are symmetric in the variables $x$ and $y$ of the kernel, duality can be used for the weighted estimates and the results of Corollary  \ref{TE2P}  apply.

\subsection{Multipliers} 
Kurtz and Wheeden studied in \cite{KuW79} weighted inequalities for classes of multipliers defined as follows (see also \cite{StTo89}). Let $\widehat{Tf}(\xi)=m(\xi)\hat f(\xi)$. We say that $m\in M(s,l)$ if 
\begin{equation*}
\sup_{R>0} \left(R^{s|\alpha|-n}\int_{R<|\xi|<2R}|D^\alpha m(\xi)|^s\right)^{1/s}<+\infty \text{ for all }|\alpha|\le l.
\end{equation*}
The result of Kurtz and Wheeden says that for $1<s\le 2$ and $n/s<l\le n$, $T_m$ is bounded on $L_{p,w}(\rn)$ if $n/l<p<\infty$ and $w\in A_{pl/n}$; its dual result for $1<p<(n/l)'$; and of weak-type $(1,1)$ with respect to weights $w$ such that $w^{n/l}\in A_1$. Although they do not make it explicit, interpolation gives that $T_m$ is bounded on $L_{p,w}(\rn)$ for $w^{lp/n}\in A_1$, when $1<p\le n/l$. 
We can apply Corollaries \ref{TE2} and  \ref{TE2P}, depending on the values of $p$. 

A particularly interesting case is $M(2,l)$, which corresponds to the Mihlin-H\"ormander multipliers. For $l=n$ the full range of results holds applying Corollary \ref{TE2} with $\gamma=1$. Let us particularize some of the results valid for any $l>n/2$:
\begin{itemize}
\item for $p\ge 2$ and $w\in A_{p/2}$, we have boundedness on $ \Lmwe$ without restriction on $r$;  for $w\in A_{p/2}\cap RH_\sigma$ we have boundedness on $\SLmwe$ for $-n/p\le r\le -n/(p\sigma)$ (Corollary \ref{TE2P} with $p_-=2$);
\item for $1<p<2$ and $w\in A_{p}\cap RH_\sigma$ with $\sigma'<2/p$, we have boundedness on $\Lmwe$ when $r\in [-n/p,-n\sigma'/2]$ and on $\SLmwe$ for $--n/p\le r\le -n/(p\sigma)-n/2$ (Corollary \ref{TE2P});
\item for $p\ge 2$ and $w(x)=|x|^\beta$ (or $(1+|x|)^\beta$) with $-n<\beta<n(p/2-1)$, we have boundedness on $ \Lmwe$ without restriction on $r$ (Corollary \ref{TE2} with $\gamma=2$); 
\item for $p\ge 2$ and $w(x)=|x|^\beta$ (or $(1+|x|)^\beta$) with $-n<\beta<0$  we have boundedness on $ \SLmwe$ for  $-n/p\le r\le \beta/{p}$ and if $0\le\beta< n(p/2-1)$, we have boundedness on $\SLmwe$ for $-n/p \le r<0$ (Corollary \ref{TE2P} with $p_-=2$ together with Remark \ref{Ma3}); 
\item for $1<p\le 2$ and $w(x)=|x|^\beta$ (or $(1+|x|)^\beta$) with $-np/2<\beta\le 0$, the boundedness on  $\Lmwe$ holds for $-\frac np\le r< -\frac {n^2}{n+\beta}\left(\frac 1p-\frac 12\right)$. This is valid for $p=1$ using the weak Morrey estimate.   In the case of  $\SLmwe$ the range is  $-\frac np\le r<-n\left(\frac 1p-\frac 12\right)+\frac \beta p$. We apply Corollary \ref{TE2P} with $p_-=p$ and $(p_+/p)'=2/p$, and Remark \ref{Ma3}.
\item for $1<p\le 2$ and $w(x)=|x|^\beta$ (or $(1+|x|)^\beta$) with $0\le \beta<n(p-1)$, we have boundedness on $\Lmwe$ and on $\SLmwe$ when $r\in [-n/p,-n/2)$. Here we use Corollary \ref{TE2P} with $p_+=2$ and the fact that these weights are in $RH_\sigma$ for all $\sigma>1$. 
\end{itemize}
The origin of Morrey spaces is in the study of the smoothness properties of solutions of PDEs. For this reason results on the multipliers $M(s,l)$ related to their smoothness are interesting (cf. \cite{Tr12, T-HS, R}).  

Another interesting case of multipliers is that of Marcinkiewicz multipliers in one dimension. They are bounded on $L_{p,w}(\real)$ for $w\in A_p$ (see \cite{Ku80}) and Corollary \ref{TE2} gives all the Morrey boundedness results for $1<p<\infty$.

\subsection{Rough operators} 

The rough singular integrals are defined in general by
\begin{equation*}
T_{\Omega, h}f(x)=\text{p.v.}\int_{\rn} \frac{\Omega(y')h(|y|)}{|y|^n}f(x-y)dy
\end{equation*}
with $\Omega\in L^1(S^{n-1})$ ($y'=y|y|^{-1}$) and integral zero, and $h$ defined on $(0,\infty)$. The (weighted) boundedness of the operator is proved using additional assumptions on $\Omega$ and $h$. For instance, if both $\Omega$ and $h$ are in $L^\infty$, it was proved in \cite{DuRu86} that $T_{\Omega, h}$ is bounded on $L_{p,w}(\rn)$ for $w\in A_p$ ($1<p<\infty$), so that the full range is obtained for Morrey spaces from Theorem \ref{TE1}. 

If for fixed $q>1$ we know that $\Omega\in L^q(S^{n-1})$ and $h\in L^q((0,\infty);dt/t)$ then the boundedness on $L_{p,w}(\rn)$ holds for $w\in A_{p/q'}$ ($q'\le p<\infty$) and also for some classes of weights obtained by duality for the range $1<p\le q$. These classes can further be extended by interpolation. Results of this type are in \cite{Wat90} and \cite{D93}. The weighted inequalities are very much like those for the multipliers of the previous section, hence similar Morrey estimates are obtained. Due to the openness properties of $A_p$ weights, D.~Watson noticed that for $\Omega\in \bigcap_{q<\infty} L^q(S^{n-1})$ and $h\in \bigcap_{q<\infty}L^q((0,\infty);dt/t)$ the full range $L_{p,w}(\rn)$ for $w\in A_p$ and $1<p<\infty$ is also obtained. Hence the full range of Morrey estimates is deduced. Notice that in this case the kernel of the operator $T_{\Omega, h}$ need not satisfy the size estimate \eqref{i5}.

Similar weighted estimates are known for the maximal operator defined from the truncated integrals of the rough operators.

\subsection{Square functions} 

There exist several types of square functions for which our theorems apply. 

The classical Littlewood-Paley operators in one dimension associated to the dyadic intervals are bounded on $L_{p,w}(\real)$ for $w\in A_p$ (see \cite{Ku80}). A similar result holds in all dimensions in we consider the square functions of discrete or continuous type built using dilations of a Schwartz function with zero integral (see \cite[Prop.~1.9]{Ry01} and the comments following it).

The Lusin area integral, the $g_\lambda$ functions, Stein's $n$-dimensional extension of the Marcinkiewicz integral and the intrinsic square function of M. Wilson are other examples of square functions for which the weighted inequalities on $L_{p,w}(\rn)$ for $w\in A_p$ ($1<p<\infty$) are known to hold (see \cite{ToW90} and \cite[Thm.~7.2 and notes of Chapter 7]{Wil08}). Of course, all of them are covered by our Corollary \ref{TE2}.

Rough variants have been considered also for square functions with the introduction of some $\Omega\in L^q(S^{n-1})$ as for the singular integrals mentioned above. The class of weights for which boundedness holds depends on  $q$ (see \cite{DFP99} and \cite{DuS02}).

On the other hand, we can mention Rubio de Francia's results of Littlewood-Paley type. The square function built using comparable cubes in $\rn$ is bounded on $L_{p,w}(\rn)$ for $w\in A_{p/2}$ and $2\le p<\infty$ (see \cite{Ru83}) and in this case we can apply Corollary \ref{TE2} with $\gamma=2$ to obtain the corresponding Morrey estimates. For the square function built using arbitrary disjoint intervals in $\real$ we can apply Corollary \ref{TE2} to the result of \cite{Ru85} in which the boundedness of the operator is obtained for $L_{p,w}(\rn)$ with $w\in A_{p/2}$ and $p>2$ (the endpoint $p=2$ remains open).

\subsection{Commutators} 

There are several results on weighted inequalities for commutators to which we can apply our theorems to obtain the corresponding Morrey estimates.  Let us mention in particular the remarkable result of \'Alvarez et al.~in \cite{ABKP93}, which allows to obtain many weighted inequalities for commutators with BMO functions. A good list of applications can be seen in their paper. For each one of them we obtain the corresponding result in the Morrey spaces.

\subsection{Other operators} 

There are many more results in the literature concerning weighted inequalities with $A_p$ type weights. We briefly mention here some other remarkable operators for which our theorems apply directly. Although in some cases particular proofs of their Morrey estimates do exist, not all of them seem to have been considered before.

Oscillatory singular integrals with kernels of the form $e^{iP(x,y)}K(x,y)$ where $P(x,y)$ is a polynomial and $K(x,y)$ is a Calder\'on-Zygmund standard kernel or a kernel of rough type have been studied and weighted results appear for instance in \cite{LDY07}. A further variant are the so-called multilinear oscillatory singular integrals in which the kernel is modified by inserting a factor $|x-y|^{-m}R_{m+1}(A;x,y)$ where the second term is the $(m+1)$-th order remainder of the Taylor series of a function $A$ expanded in $x$ about $y$. Some weighted inequalities are in \cite{Wu04}.  Other types of oscillating kernels are of the form $K_{a,b+iy}(t)=\exp(i|t|^a)(1+|t|)^{-(b+iy)}$ for which weighted inequalities are obtained in \cite{ChKS83}, and strongly singular convolution operators corresponding to multipliers of the form $\theta(\xi)e^{i|\xi|^b}|\xi|^{-a}$, where $\theta$ is a smooth radial cut-off function (vanishing in a neighborhood of the origin and which is identically $1$ outside a compact set), studied in \cite{Ch84}. 

Bochner-Riesz operators at the critical index satisfy $A_p$-weighted estimates for $1<p<\infty$ and weak-type $(1,1)$ estimates with $A_1$ weights (\cite{ShSu92} and \cite{Var96}). The boundedness on the Morrey spaces are obtained from Corollary \ref{TE2}. Below the critical index only partial results can be obtained, but still there are some subclasses of $A_p$ weights (see \cite{CDL12}, for instance) for which the corresponding theorems in the previous section can be applied. Some weighted inequalities also hold for the maximal Bochner-Riesz operator. 

Weighted inequalities for pseudodifferential operators are for instance in \cite{Ya85}  and \cite{Sat05}, while for Fourier integral operators they can be found in \cite{DSS14}. In both cases, Morrey space estimates can be deduced from our results in Section \ref{sec4}. 

\subsection{Morrey norm inequalities with $A_\infty$ weights} 
Let $0<p<\infty$ and $w\in A_\infty$. Then the following inequalities hold whenever the left-hand side is finite:
\begin{align}
\label{five1}
\norm{Tf}{\Lmwe}&\le c_1 \norm{Mf}{\Lmwe},\\
\label{five2}
\norm{Mf}{\Lmwe}&\le c_2 \norm{M^\sharp f}{\Lmwe},\\
\label{five3}
\norm{I_\alpha f}{\Lmwe}&\le c_3 \norm{M_\alpha f}{\Lmwe},
\end{align}
where $T$ is a Calder\'on-Zygmund operator in \eqref{five1}, $M$ is the Hardy-Littlewood maximal operator in \eqref{five1} and  \eqref{five2}, $M^\sharp$ is the sharp maximal function (see \cite[p. 117]{D01}) in  \eqref{five2}, and $I_\alpha$ and $M_\alpha$ are the fractional integral and the fractional maximal operator of order $\alpha\in (0,n)$ (see \cite[p. 88-89]{D01}) in \eqref{five3}. Similar inequalities hold with $\SLmwe$ instead of $\Lmwe$ if $w$ is assumed to be in $RH_\sigma$ and $-n/p \le r\le-n/(p\sigma)$.

The inequalities are derived from their $L_{p,w}$ counterparts using Corollary \ref{TEAinf}. A proof of the needed $L_{p,w}$ estimates using extrapolation can be found in \cite{CMP04}, where references concerning the original inequalities (obtained by means of good-$\lambda$ inequalities) are given. In a similar way, other $L_{p,w}$ inequalities appearing in \cite{CMP04} can also be written with Morrey norms. In that paper weak-type inequalities and vector-valued inequalities are also given and they can also be transferred to the Morrey setting, but we do not go into this in detail. Weighted inequalities with Morrey norms involving $M^\sharp$ with $A_\infty$ weights may be found also in  \cite[Corollary 2]{NkSa17}.

\end{document}